\theoremstyle{plain}
\newtheorem{theorem}{Theorem}[section]
\newtheorem{lemma}[theorem]{Lemma}
\newcommand{\bnum}{\begin{enumerate}}
\newcommand{\enum}{\end{enumerate}}
\numberwithin{equation}{section}
\DeclareMathOperator{\diam}{diam}
\begin{document}
%\title{\textbf{An induced subgraph of relative $g$-noncommuting graph of  finite groups}}
\title{\textbf{$g$-noncommuting graph of a  finite group relative to its subgroups}}
\author{Monalisha Sharma and Rajat Kanti Nath\footnote{Corresponding author}
}
\date{}
\maketitle
\begin{center}\small{\it
Department of Mathematical Sciences, Tezpur University,\\ Napaam-784028, Sonitpur, Assam, India.\\

Emails:\, monalishasharma2013@gmail.com and rajatkantinath@yahoo.com}
\end{center}
\begin{abstract}
Let $H$ be  a  subgroup of a finite non-abelian group  $G$ and $g \in G$. Let $Z(H, G) = \{x \in H : xy = yx, \forall y \in G\}$. We introduce the graph $\Delta_{H, G}^g$   whose vertex set is $G \setminus Z(H, G)$ and two distinct vertices $x$ and $y$ are adjacent if $x \in H$ or $y \in H$ and  $[x,y] \neq g, g^{-1}$, where $[x,y] = x^{-1}y^{-1}xy$.  In this paper, we determine whether $\Delta_{H, G}^g$ is a tree among other results.
We also  discuss about its diameter and connectivity with special attention to the dihedral groups.     
\end{abstract}
\medskip
\noindent {\small{\textit{Key words:}  finite group, $g$-noncommuting graph, connected graph.}}

\noindent \small{\textbf{\textit{2010 Mathematics Subject Classification:}}  05C25,  20P05}

\section{Introduction}
In general group theory and graph theory are closely related. Several properties of groups  can be described through properties of graphs and vice versa. Characterizations of finite groups through various graphs defined on it have been an interesting topic of research over the last five decades. Non-commuting graph is one of such interesting graphs widely studied in the literature \cite{AAM06,AF15,AI12, Daraf09, DBBM10,dn-JLTA2018,ddn2018,JDSO2015, JDS2015,JMS2019,JSO2015,Mogh05,MSZZ05,  T08,VK18}, since its inception \cite{EN76}. In this paper we introduce a generalization of non-commuting graphs of finite group. Let $H$ be  a  subgroup of a finite non-abelian group  $G$ and $g \in G$. Let $Z(H, G) = \{x \in H : xy = yx, \forall y \in G\}$. We introduce the graph $\Delta_{H, G}^g$   whose vertex set is $G \setminus Z(H, G)$ and two distinct vertices $x$ and $y$ are adjacent if $x \in H$ or $y \in H$ and  $[x,y] \neq g, g^{-1}$, where $[x,y] = x^{-1}y^{-1}xy$. Clearly, $\Delta_{H, G}^g = \Delta_{H, G}^{g^{-1}}$. Also, $\Delta_{H, G}^g$ is an induced subgraph of $\Gamma_{H, G}^g$, studied by the authors in \cite{SN2020}, induced by $G \setminus Z(H,G)$.
  If $H = G$ and $g = 1$ then $\Delta_{H, G}^g := \Gamma_G$,  the non-commuting graph of $G$.  
%If $g = 1$ then $\Delta_{H, G}^g := \Gamma_{H, G}$, a generalization of  $\Gamma_G$ called relative non-commuting graph  of $G$ introduced and studied by Tolue and  Erfanian \cite{TE13}. 
If $H = G$ then $\Delta_{H, G}^g := \Delta_{G}^g$, a generalization of  $\Gamma_G$ called induced g-noncommuting graph of $G$ on $G \setminus Z(G)$ studied extensively in \cite{NEGJ16,NEGJ17,NEM18} by  Erfanian and his collaborators. 
%Thus $\Delta_{H, G}^g$ is a blended version of $\Gamma_{H, G}$ and $\Delta_{G}^g$. 

If $g \notin K(H,G)$ then any pair of vertices $(x, y)$ are adjacent in $\Delta_{H, G}^g$ trivially if $x, y \in H$ or one of $x$ and $y$ belongs to $H$. Therefore, we consider $g \in K(H,G)$.  Also, if $H=Z(H,G)$ then $K(H,G) = \{1\}$ and so $g = 1$. Thus, throughout this paper, we shall consider $H \ne Z(H,G)$ and $g \in K(H,G)$. In this paper, we determine whether $\Delta_{H, G}^g$ is a tree among other results.
We also  discuss about its diameter and connectivity with special attention to the dihedral groups. We conclude this section by the following examples of $\Delta_{H, G}^g$ where $G = A_4 = \langle a, b : a^2 = b^3 = (ab)^3 =1 \rangle$ and the subgroup $H$ is given by   $H_1 = \{1, a\}$, $H_2=\{1, bab^2\}$ or $H_3 = \{1, b^2ab\}$.
%Consider the group $A_4 =\langle a,b~|~a^2 = b^3 = (ab)^3 =1 \rangle$. 
%Let $g_1 = a$, $g_2 = bab^2$ and $g_3 = b^2ab$. Then for $H_1 = \{1, g_1\}$, $H_2=\{1, g_2\}$ and $H_3 = \{1, g_3\}$ we have the following graphs. 
 
%\textbf{Draw the graphs here. No need to introduce $g_1, g_2, g_3$. Write the elements $a, bab^2, b^2ab$ whenever required. }

\begin{center}
		{\includegraphics[width=12cm]{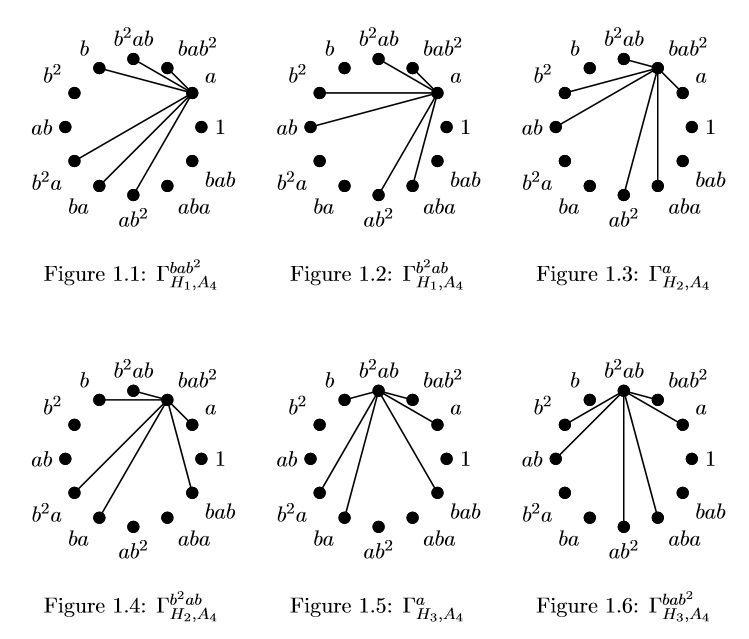}}
\end{center}

  \section{Vertex degree and a consequence}

In this section we first determine $\deg(x)$, the degree of a vertex $x$ of the graph $\Delta_{H, G}^g$. After that we determine whether  $\Delta_{H, G}^g$ is a tree.
Corresponding to Theorem 2.1 and Theorem 2.2 of \cite{SN2020} we have the following two results for $\Delta_{H, G}^g$.

\begin{theorem}\label{deg_prop_1.1}
Let $x \in H \setminus Z(H,G)$ be any vertex in  $\Delta_{H, G}^g$.  
\begin{enumerate}
\item If $g=1$ then $\deg(x)= |G| - |C_G(x)|.$	
	
\item If $g \neq 1$ and $g^2 \neq 1$  then

$\deg(x) = \begin{cases}
|G| - |Z(H,G)| - |C_G(x)| - 1, & \mbox{if $x$ is conjugate to}\\
& \mbox{$xg$ or $xg^{-1}$} \\
|G| - |Z(H,G)| - 2|C_G(x)| - 1,& \mbox{if $x$ is conjugate to}\\ 
& \mbox{$xg$ and $xg^{-1}$.}  
\end{cases}$
	
\item If $g \neq 1$ and $g^2 = 1$  then $\deg(x) = |G| - |Z(H,G)| - |C_G(x)| - 1$, whenever $x$ is conjugate to $xg$.
\end{enumerate}
\end{theorem}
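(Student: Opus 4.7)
The plan is to count, for a fixed vertex $x \in H \setminus Z(H,G)$, the number of $y \in (G \setminus Z(H,G)) \setminus \{x\}$ satisfying $[x,y] \notin \{g, g^{-1}\}$. Since $x \in H$, the adjacency clause ``$x \in H$ or $y \in H$'' is automatic, so only the commutator restriction matters. I will set up the counting by complementation, isolating the fibre sets $S_t(x) := \{y \in G : [x,y] = t\}$ for $t = g$ and $t = g^{-1}$.

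The key structural observation is that $S_t(x)$ is either empty or a coset of $C_G(x)$. Rewriting $[x,y] = t$ as $y^{-1} x y = xt$, any two solutions $y_1, y_2$ satisfy $y_1 y_2^{-1} \in C_G(x)$, and conversely $C_G(x) y_2 \subseteq S_t(x)$ whenever $y_2 \in S_t(x)$. Hence $|S_t(x)| \in \{0, |C_G(x)|\}$, with the non-zero case occurring precisely when $xt$ is conjugate to $x$ in $G$. I will also use two auxiliary facts: first, $Z(H,G) \subseteq Z(G)$, so $[x,z] = 1$ for any $z \in Z(H,G)$, whence $S_g(x) \cap Z(H,G) = S_{g^{-1}}(x) \cap Z(H,G) = \emptyset$ whenever $g \neq 1$; second, $[x,x] = 1$, so $x$ itself does not lie in $S_g(x) \cup S_{g^{-1}}(x)$ when $g \neq 1$.

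With these in hand, the three cases reduce to straightforward bookkeeping. For $g = 1$, the non-adjacent $y$'s are precisely those with $[x,y] = 1$, i.e.\ $y \in C_G(x) \setminus Z(H,G)$, of cardinality $|C_G(x)| - |Z(H,G)|$; this already includes $x$, and subtracting from $|G| - |Z(H,G)|$ yields $|G| - |C_G(x)|$. For $g \neq 1$ with $g^2 \neq 1$, we have $g \neq g^{-1}$, so $S_g(x)$ and $S_{g^{-1}}(x)$ are disjoint and each contributes $|C_G(x)|$ forbidden neighbours whenever its conjugacy condition holds; combined with the single $-1$ for excluding $x$, this produces the two displayed subcases according as exactly one, or both, of the conditions hold. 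For $g \neq 1$ with $g^2 = 1$, the equality $g = g^{-1}$ forces $S_g(x) = S_{g^{-1}}(x)$, so $|C_G(x)|$ is subtracted only once.

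I do not anticipate a serious obstacle: the whole argument is a translation from commutator equations into conjugation via orbit--stabiliser, together with careful inclusion--exclusion involving $Z(H,G)$ and the exclusion of $x$ itself. The one place demanding care is the $g=1$ case, where one must note $Z(H,G) \subseteq Z(G) \subseteq C_G(x)$ so that the deletions of $|Z(H,G)|$ from $|G|$ and from $|C_G(x)|$ remain coherent.
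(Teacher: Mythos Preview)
Your proposal is correct and takes essentially the same approach as the paper. The paper proves part~(a) by the identical direct count and then defers parts~(b) and~(c) to \cite[Theorem~2.1]{SN2020}; your coset-of-$C_G(x)$ description of the fibres $S_t(x)$ is exactly the content of that reference, so you have simply made the paper's argument self-contained.
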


\begin{proof}
(a) Let $g = 1$. Then $\deg(x)$ is the number of $y \in G \setminus Z(H,G)$ such that $xy \ne yx$. Hence, 
\[
\deg(x) = |G|-|Z(H,G)|-(|C_G(x)|-|Z(H,G)|) = |G| - |C_G(x)|.
\]
\noindent Proceeding as the proof of   \cite[Theorem 2.1 (b), (c)]{SN2020}, parts (b) and (c) follow noting that the vertex set of $\Delta_{H,G}^g$ is $G \setminus Z(H,G)$.
\end{proof}

\begin{theorem}\label{deg_prop_2.1}
Let $x \in G \setminus H$ be any vertex in  $\Delta_{H, G}^g$.  
\begin{enumerate}
\item If $g=1$ then $\deg(x)= |H| - |C_H(x)|.$
	
\item If $g \neq 1$ and $g^2 \neq 1$  then

$\deg(x) = \begin{cases}
|H| - |Z(H,G)| - |C_H(x)|, &\!\!\!\!\mbox{if $x$ is conjugate to $xg$ or}\\
&\!\!\!\mbox{$xg^{-1}$ for some element in $H$}\\
|H| - |Z(H,G)| - 2|C_H(x)|, &\!\!\!\!\mbox{if $x$ is conjugate to $xg$ and}\\
&\!\!\!\mbox{$xg^{-1}$ for some element in $H$}.
\end{cases}$

\item If $g \neq 1$ and $g^2 = 1$  then $\deg(x) = |H| - |Z(H,G)|- |C_H(x)|$, whenever $x$ is conjugate to $xg$, for some element in H.
\end{enumerate}
\end{theorem}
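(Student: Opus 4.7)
The strategy parallels the proof of Theorem~\ref{deg_prop_1.1}, with the roles of $H$ and $G$ partly interchanged. Since $x \in G \setminus H$, the adjacency rule forces any neighbour $y$ of $x$ to lie in $H$, and hence, being a vertex, in $H \setminus Z(H,G)$; moreover $y \neq x$ holds automatically. Therefore
\[
\deg(x) \;=\; |H \setminus Z(H,G)| \;-\; |\{ y \in H \setminus Z(H,G) : [x,y] \in \{g, g^{-1}\}\}|.
\]

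For part~(a), $g = 1$ reduces adjacency to non-commutation. Using $Z(H,G) \subseteq C_H(x) := C_G(x) \cap H$, I would count directly to obtain
\[
\deg(x) \;=\; (|H| - |Z(H,G)|) - (|C_H(x)| - |Z(H,G)|) \;=\; |H| - |C_H(x)|.
\]

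For parts~(b) and~(c), I would set $T_h(x) := \{ y \in H : [x,y] = h \}$ for $h \in \{g, g^{-1}\}$. Since $g \neq 1$, we have $T_h(x) \cap Z(H,G) = \emptyset$, so the displayed degree formula above becomes $\deg(x) = |H| - |Z(H,G)| - |T_g(x) \cup T_{g^{-1}}(x)|$. The key combinatorial lemma is the coset identification: if $y_0 \in T_g(x)$, then $y \in T_g(x)$ iff $y^{-1}xy = y_0^{-1}xy_0$ iff $y y_0^{-1} \in C_G(x)$, whence $T_g(x) = C_H(x) y_0$ and $|T_g(x)| = |C_H(x)|$ whenever $T_g(x) \neq \emptyset$, and likewise for $T_{g^{-1}}(x)$. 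Observe that $T_g(x)$ is non-empty precisely when $x$ is conjugate to $xg$ by some element of $H$. For~(b), $g^2 \neq 1$ forces $T_g(x) \cap T_{g^{-1}}(x) = \emptyset$, so the union has size $|C_H(x)|$ when exactly one of the two sets is non-empty and $2|C_H(x)|$ when both are, matching the two stated sub-cases. For~(c), $g = g^{-1}$ collapses the two sets into one of size $|C_H(x)|$ whenever it is non-empty, yielding the claimed formula.

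The only delicate point is the coset identification $T_g(x) = C_H(x) y_0$, particularly the verification that $C_G(x) y_0 \cap H = C_H(x) y_0$ when $y_0 \in H$; the remainder is routine bookkeeping about which of the two sets $T_g(x), T_{g^{-1}}(x)$ is non-empty. Since this is directly the computation of \cite[Theorem~2.2]{SN2020} restricted to the induced subgraph on $G \setminus Z(H,G)$, I anticipate the authors simply to invoke that earlier result and note the change of vertex set, in analogy with the closing remark of the proof of Theorem~\ref{deg_prop_1.1}.
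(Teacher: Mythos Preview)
Your proposal is correct and matches the paper's approach exactly: the paper proves part~(a) by the same direct count you give, and for parts~(b) and~(c) it does precisely what you anticipate in your final paragraph, namely invoke \cite[Theorem~2.2(b),(c)]{SN2020} and note that the only change is the vertex set $G \setminus Z(H,G)$. Your explicit coset argument for $T_g(x) = C_H(x)y_0$ is the content of that cited result, so you have simply unpacked what the paper leaves as a reference.
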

\begin{proof}
(a) Let $g = 1$. Then $\deg(x)$ is the number of $y \in H \setminus Z(H,G)$ such that $xy \ne yx$. Hence, 
\[
\deg(x) = |H|-|Z(H,G)|-(|C_H(x)|-|Z(H,G)|) = |H| - |C_H(x)|.
\]
\noindent Proceeding as the proof of   \cite[Theorem 2.2 (b), (c)]{SN2020}, parts (b) and (c) follow noting that the vertex set of $\Delta_{H,G}^g$ is $G \setminus Z(H,G)$.
\end{proof}

As a consequence of above results we have the following.
\begin{theorem}\label{delta-not-tree}
If $|H| \ne 2, 3, 4, 6$ then  $\Delta_{H, G}^g$ is not a tree. 
\end{theorem}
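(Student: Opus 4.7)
The plan is to show that $\Delta_{H,G}^g$ contains a cycle whenever $|H| \notin \{2,3,4,6\}$, thereby ruling out the tree structure. The structural key is that adjacency in $\Delta_{H,G}^g$ requires at least one endpoint to lie in $H$; hence $B := G \setminus H$ is an independent set, and every edge is incident to $A := H \setminus Z(H,G)$. The argument proceeds by case analysis on $g$ and on whether $H$ is abelian, either exhibiting a triangle in $A$ or forcing $|E(\Delta_{H,G}^g)| > |V(\Delta_{H,G}^g)| - 1$ via a degree count over $B$.

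First consider $g \ne 1$ with $H$ abelian. For any two distinct $x, y \in A$ the commutator $[x,y] = 1$ differs from both $g$ and $g^{-1}$, so $x$ and $y$ are adjacent, and $A$ induces $K_{|A|}$. Since $Z(H,G) = H \cap Z(G)$ is a proper subgroup of $H$, Lagrange's theorem gives $|A| \ge |H|/2 \ge 3$ for $|H| \ge 5$, producing a triangle.

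For $g = 1$, I would apply Theorem~\ref{deg_prop_2.1}(a): $\deg(y) = |H| - |C_H(y)|$ for every $y \in B$. Either $C_H(y) = H$ and $y$ is isolated, so $\Delta_{H,G}^g$ is disconnected; or $|C_H(y)|$ is a proper divisor of $|H|$, giving $|C_H(y)| \le |H|/2$ and hence $\deg(y) \ge 3$ for $|H| \ge 5$. Consequently
\[
|E(\Delta_{H,G}^g)| \ge \sum_{y \in B} \deg(y) \ge 3|B|,
\]
and since $H \ne G$ forces $[G:H] \ge 2$, we have $|B| \ge |H| \ge |A|$, whence $3|B| \ge 2|B| + |A| > |A| + |B| - 1 = |V(\Delta_{H,G}^g)| - 1$, contradicting the tree edge bound. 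The subcase $H = G$ (so $B = \emptyset$) is handled analogously via Theorem~\ref{deg_prop_1.1}(a) on $A$, using $|C_G(x)| \le |G|/2$ for $x \in A$ to bound $|E| \ge |A||G|/4 > |A| - 1 = |V| - 1$.

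The main obstacle is the remaining case $g \ne 1$ with $H$ non-abelian, where $|H| \ge 8$ since the only non-abelian group of order at most $7$ has order $6$ (excluded). Here I would invoke Theorems~\ref{deg_prop_1.1}(b),(c) and \ref{deg_prop_2.1}(b),(c): a vertex $y \in B$ with $C_H(y) = H$ satisfies $[h, y] = 1 \ne g, g^{-1}$ for every $h \in H$, so $y$ is adjacent to all of $A$ and $\deg(y) = |A|$; otherwise $|C_H(y)| \le |H|/2$, and $\deg(y) \in \{|A|, |A| - |C_H(y)|, |A| - 2|C_H(y)|\}$, negative values being discarded as impossible. The bound $|Z(H)| \le |H|/4$ for non-abelian $H$ gives $|A| \ge 3|H|/4$, and a divisor-by-divisor inspection then shows that the minimum positive value of $\deg(y)$ is at least $2$ whenever $|H| \ge 8$; the exclusion $|H| \ne 6$ is precisely what rules out the borderline case $H \cong S_3$, in which $|C_H(y)| = 2$ can force $\deg(y) = 1$ and the counting breaks down. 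Combining $\sum_{y \in B} \deg(y) \ge 2|B|$ with $|B| \ge |H|$ yields $|E| \ge 2|B| > |A| + |B| - 1 = |V| - 1$, again contradicting the tree bound.
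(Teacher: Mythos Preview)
Your strategy---exhibit a triangle in $A = H \setminus Z(H,G)$ or force $|E| \ge |V|$ by summing degrees over $B = G \setminus H$---is different from the paper's, which assumes the graph is a tree, picks a leaf, and shows via the degree formulas that $\deg(x) = 1$ forces $|H| \in \{2,3,4,6\}$ (or, when the leaf lies in $H \setminus Z(H,G)$, forces $|G| \le 12$ with $H = G$, whereupon \cite{TEJ14} is invoked). Your first two cases ($g \ne 1$ with $H$ abelian, and $g = 1$) are correct and pleasantly direct.

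The third case ($g \ne 1$, $H$ non-abelian) has a genuine gap: the bound $|B| \ge |H|$ requires $H \ne G$, and when $H = G$ the sum $\sum_{y \in B}\deg(y)$ is empty, so your edge count yields nothing. You handled $H = G$ separately under $g = 1$ but omitted it here; repairing this would mean ruling out leaves in $A$ via Theorem~\ref{deg_prop_1.1}(b),(c), which is exactly the paper's Case~2--3 analysis for $x \in H \setminus Z(H,G)$ and ultimately leans on \cite{TEJ14} for the residual groups of order $8$ and $12$. Likewise, your ``divisor-by-divisor inspection'' that $\deg(y) \ne 1$ for $y \in B$ when $|H| \ge 8$ is only asserted; actually carrying it out (solve $|H| - |Z(H,G)| - k|C_H(y)| = 1$ for $k \in \{1,2\}$ under $|Z(H,G)| \mid |C_H(y)| \mid |H|$) reproduces the paper's leaf analysis for $x \in G \setminus H$ almost verbatim. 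So in the non-abelian case your argument is incomplete, and once completed it collapses back to the paper's method.
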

\begin{proof}
Suppose that $\Delta_{H, G}^g$ is a tree. Then there exists a vertex $x \in G \setminus Z(H,G)$ such that $\deg(x) = 1$. If $x \in H \setminus Z(H,G)$ then we have the following cases.

\noindent \textbf{Case 1:} If $g = 1$, then by Theorem \ref{deg_prop_1.1}(a), we have  $\deg(x) = |G|-|C_G(x)| = 1$. Therefore, $|C_G(x)| = 1$, contradiction.

\noindent \textbf{Case 2:} If $g \neq 1$ and $g^2 = 1$, then by Theorem \ref{deg_prop_1.1}(c), we have $\deg(x) = |G|-|Z(H,G)|-|C_G(x)|-1 = 1$. That is,  
\begin{equation}\label{induced-eq-1}
|G|-|Z(H,G)|-|C_G(x)| = 2.
\end{equation}
Therefore, $|Z(H,G)| = 1$ or $2$. Thus \eqref{induced-eq-1} gives  $|G| - |C_G(x)| = 3$ or $4$. Therefore, $|G| = 6$ or $8$.
Since $|H| \ne 2, 3, 4, 6$ we must have 
$G \cong D_8$ or $Q_8$ and $H = G$ and hence, by {\rm\cite[Theorem 2.5]{TEJ14}}, we get a contradiction.

\noindent \textbf{Case 3:} If $g \neq 1$ and $g^2 \neq 1$, then by Theorem \ref{deg_prop_1.1}(b), we have $\deg(x) = |G|-|Z(H,G)|-|C_G(x)|-1 = 1$, which will lead to \eqref{induced-eq-1} (and eventually to a contradiction) or $\deg(x) = |G|-|Z(H,G)|-2|C_G(x)|-1 = 1$. That is, \begin{equation}\label{induced-eq-3}
\text{or }|G|-|Z(H,G)|-2|C_G(x)| = 2.
\end{equation}
Therefore, $|Z(H,G)| = 1$ or $2$. Thus if $|Z(H,G)| = 1$ then \eqref{induced-eq-3} gives $|G| = 9$, which is a contradiction since $G$ is non-abelian. Again if $|Z(H,G)| = 2$ then \eqref{induced-eq-3} gives  $|C_G(x)| = 2$ or $4$. Therefore, $|G| = 8$  or $|G| = 12$. If $|G| = 8$ then we get a contradiction as shown in Case 2 above. If $|G| = 12$ then  $G \cong D_{12}$ or $Q_{12}$,  since $|Z(H,G)| = 2$.
In both the cases we must have  $H = G$ and hence, by {\rm\cite[Theorem 2.5]{TEJ14}}, we get a contradiction.

 Now we assume that $x \in G \setminus H$ and consider  the following cases.
 
\noindent \textbf{Case 1:} If $g = 1$, then by Theorem \ref{deg_prop_2.1}(a), we have  $\deg(x) = |H|-|C_H(x)| = 1$. Therefore, $|H| = 2$, a contradiction.

\noindent \textbf{Case 2:} If $g \neq 1$ and $g^2 = 1$, then by Theorem \ref{deg_prop_2.1}(c), we have $\deg(x) = |H|-|Z(H,G)|-|C_H(x)| = 1$. That is,  
\begin{equation}\label{induced-eq-4}
|H|-|C_H(x)| = 2.
\end{equation}
Therefore, 
$|H| = 3$ or $4$, a contradiction.

\noindent \textbf{Case 3:} If $g \neq 1$ and $g^2 \neq 1$, then by Theorem \ref{deg_prop_2.1}(b), we have $\deg(x) = |H|-|Z(H,G)|-|C_H(x)| = 1$, which  leads to \eqref{induced-eq-4} or $\deg(x) = |H|-|Z(H,G)|-2|C_H(x)| = 1$. That is,  
\begin{equation}\label{induced-eq-5}
|H| -2|C_H(x)| = 2.
\end{equation}
Therefore, $|C_H(x)| = 1$ or $2$. Thus if $|C_H(x)| = 1$ then \eqref{induced-eq-5} gives $|H| = 4$, a contradiction.
If $|C_H(x)| = 2$ then \eqref{induced-eq-5} gives  $|H| = 6$, a contradiction.
\end{proof}
The following theorems also give that the condition on $|H|$ in Theorem \ref{delta-not-tree} can not be removed completely.
\begin{theorem}\label{not_tree}
If $G$ is a non-abelian group of order $\leq 12$ and $g = 1$ then $\Delta_{H, G}^g$ is a tree if and only if $G \cong D_6$ or $D_{10}$ and $|H| = 2$. 
\end{theorem}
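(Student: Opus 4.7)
The plan is to combine Theorem~\ref{delta-not-tree} with a finite case analysis. By Theorem~\ref{delta-not-tree}, $\Delta_{H,G}^1$ being a tree forces $|H| \in \{2,3,4,6\}$, so (given $|G| \leq 12$ and $G$ non-abelian) one only has to test the subgroups of the seven groups $D_6, D_8, Q_8, D_{10}, D_{12}, A_4, Q_{12}$. Because $g = 1$, the graph has vertex set $G \setminus Z(H,G)$ and an edge $\{x,y\}$ whenever $x \in H$ or $y \in H$ and $[x,y] \ne 1$; Theorems~\ref{deg_prop_1.1}(a) and~\ref{deg_prop_2.1}(a) supply all the degrees needed.

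For sufficiency, I would verify directly that with $G = D_6$ or $D_{10}$ and $|H| = 2$ the graph is a star. In these groups $Z(G) = \{1\}$, so $H = \{1, x\}$ with $x$ a reflection and $Z(H,G) = \{1\}$. A reflection in $D_{2n}$ with $n$ odd has centralizer $\{1, x\}$, so Theorem~\ref{deg_prop_1.1}(a) yields $\deg(x) = |G| - 2 = |G \setminus Z(H,G)| - 1$. Since $x$ is the unique vertex in $H \setminus Z(H,G)$, every edge of $\Delta_{H,G}^1$ is incident to $x$, and the graph is the star $K_{1,|G|-2}$, which is a tree.

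For necessity, I would rule out all remaining pairs $(G, H)$. When $|H| = 2$ and $H \ne Z(H,G)$, write $H = \{1, x\}$; the graph is a star centred at $x$ together with $|C_G(x)| - 2$ isolated vertices from $C_G(x) \setminus H$, so it is a tree iff $C_G(x) = H$. This fails in each of $D_8, Q_8, D_{12}, A_4, Q_{12}$: either $Z(G) \ne \{1\}$ contributes extra centralizing elements, or every non-central $x$ has $|C_G(x)| \ge 3$ (for example, each double transposition in $A_4$ is centralized by $V_4$). When $|H| \in \{3,4,6\}$ one can pick two distinct $x_1, x_2 \in H \setminus Z(H,G)$; a brief inspection of each candidate $(G,H)$ produces common non-centralizing vertices $y_1, y_2 \in G \setminus Z(H,G)$, giving the $4$-cycle $x_1 y_1 x_2 y_2 x_1$ (typically both $x_1$ and $x_2$ share the same non-centralizer set, as happens for the rotation subgroup of $D_{2n}$ or for $V_4 \le A_4$). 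The main obstacle is therefore just the bookkeeping across the handful of groups and subgroups involved; no new idea is required beyond the centralizer computations in these well-known small groups.
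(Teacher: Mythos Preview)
Your proposal is correct and follows essentially the same strategy as the paper: a finite case analysis over the seven non-abelian groups of order at most $12$, showing that the graph is a star precisely when $G\cong D_6$ or $D_{10}$ and $|H|=2$, and otherwise is disconnected or contains a cycle. The only organisational difference is that you invoke Theorem~\ref{delta-not-tree} to restrict to $|H|\in\{2,3,4,6\}$ and phrase the $|H|=2$ case uniformly via the criterion $C_G(x)=H$, whereas the paper simply walks through every proper subgroup and exhibits an explicit isolated vertex or $4$-cycle in each case; the underlying verifications are the same.
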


\begin{proof}
If $H$ is the trivial subgroup of $G$ then $\Delta_{H, G}^g$ is an empty graph. If $H=G$ then, by {\rm\cite[Theorem 2.5]{TEJ14}}, we have $\Delta_{H, G}^g$ is not a tree. So we examine only the proper subgroups of $G$, where  $G \cong D_6, D_8, Q_8, D_{10}, D_{12}, Q_{12}$ or $A_4$. We consider the following cases.

\noindent \textbf{Case 1:} $G \cong D_6 = \langle a, b : a^3 = b^2 = 1 \text{ and } bab^{-1}=a^{-1}\rangle$. If $|H| = 2$ then $H = \langle x\rangle$, where $x = b, ab$ and $a^2b$. 
%$H = \langle b\rangle$, $\langle ab\rangle$ or $\langle a^2b\rangle$. 
We have $[x, y] \ne 1$ for all $y \in G \setminus Z(H, G)$. Therefore,  $\Delta_{H, D_6}^g$ is a star graph and hence, a tree. If $|H| = 3$ then $H = \{1, a, a^2\}$. In this case, the vertices $a$, $ab$, $a^2$ and $b$ make a cycle since $[ab, a] = a^2 = [a^2, ab]$ and $[a, b] = a = [b, a^2]$. 

%$G \cong S_3=\langle a,b~|~a^2 = b^2 = (ab)^3 =1 \rangle$. If $|H|=2$ then $H = \langle a \rangle$ or $\langle b \rangle$ or $\langle aba \rangle$. For each case of $H$, $\Delta_{H, G}^g$ is a star graph and hence, a tree. If $|H|=3$ then $H = \{1,ab, ba\}$. The vertices $ab$, $a$, $ba$ and $b$ make a cycle since $[ab,a]=ab=[a,ba]$ and $[ba,b]=ba=[b,ab]$. 

\noindent \textbf{Case 2:}  $G \cong D_8 = \langle a, b : a^4 = b^2 = 1 \text{ and } bab^{-1} = a^{-1}\rangle$. If $|H| = 2$ then $H = Z(D_8)$ or $\langle a^rb \rangle$, where  $r = 1, 2, 3, 4$. Clearly $\Delta_{H,D_8}^g$ is an empty graph if $H = Z(D_8)$. If $H = \langle a^rb\rangle$ then, in each case,  $a^2$ is an isolated vertex in $G \setminus H$ (since $[a^2, a^rb]=1$). Hence, $\Delta_{H,D_{8}}^g$ is disconnected. If $|H|=4$ then  $H = \{1, a, a^2, a^3\}$, $\{1, a^2, b, a^2b\}$ or $\{1, a^2, ab, a^3b\}$. If $H = \{1, a, a^2, a^3\}$  then the vertices $ab$, $a$, $b$ and $a^3$ make a cycle; if $H=\{1, a^2, b, a^2b\}$ then the vertices $ab$, $b$, $a^3$ and $a^2b$ make a cycle; and if  $H=\{1, a^2, ab, a^3b\}$ then the vertices $ab$, $a$, $a^3b$ and $b$ make a cycle (since $[a,b]=[a^3,b]=[a^3,ab]=[a^3,a^2b]=[ab,a]=[a^2b,ab]=[a^3b,a]=[b,ab]=[b,a^3b]=a^2 \ne 1$). 
%Hence, $\Delta_{H,D_{8}}^g$ is not a tree.

\noindent \textbf{Case 3:}  $G \cong Q_8 = \langle a, b : a^4 = 1, b^2=a^2 \text{ and } bab^{-1} = a^{-1}\rangle$. If $|H|=2$ then $H = Z(Q_8)$ and so $\Delta_{H,Q_8}^g$ is an empty graph. If $|H| = 4$ then $H = \{1, a, a^2, a^3\}$,  $\{1, a^2, b, a^2b\}$ and $\{1, a^2, ab, a^3b\}$. Again, if  $H = \{1, a, a^2, a^3\}$ then the vertices $a$, $b$, $a^3$ and $ab$ make a cycle; if
 $H = \{1, a^2, b, a^2b\}$ then the vertices $b$, $a^3b$, $a^2b$ and $a^3$ make a cycle; and if   
  $H = \{1, a^2, ab, a^3b\}$ then the vertices $ab$, $a$, $a^3b$ and $a^2b$ make a cycle (since $[a, b] = [b, a^3] = [a^3, ab] = [ab, a] = [b, a^3b] = [a^3b, a^2b] = [a^2b, a^3] = [a, a^3b] = [a^2b, ab] = a^2 \ne 1$). 

\noindent \textbf{Case 4:}  $G \cong D_{10} = \langle a, b : a^5 = b^2 = 1 \text{ and } bab^{-1} = a^{-1}\rangle$. If $|H| = 2$ then $H = \langle a^rb\rangle$, for every integer $r$ such that $1 \leq r \leq 5$. For each case of $H$, $\Delta_{H, D_{10}}^g$ is a star graph since $[a^rb, x] \ne g$ for all $x \in G \setminus H$. If $|H| = 5$ then   $H = \{1, a, a^2, a^3, a^4\}$. In this case,  the vertices $a$, $ab$, $a^3$ and $a^3b$ make a cycle in $\Delta_{H, D_{10}}^g$ since $[a, ab] = a^3 \ne 1$, $[ab, a^3] = a \ne 1$, $[a^3, a^3b] = a^4 \ne 1$ and $[a^3b, a] = a^2  \ne 1$.

\noindent \textbf{Case 5:} $G \cong D_{12} = \langle a, b : a^6 = b^2 = 1 \text{ and } bab^{-1} = a^{-1}\rangle$. If $|H| = 2$ then $H = Z(D_{12})$ or $\langle a^rb\rangle$, for every integer $r$ such that $1 \leq r \leq 6$. 
%If  $H = Z(D_{12})$ then $\Delta_{H,Q_8}^g$ is an empty graph. If $H = \langle a^rb\rangle$ then ......
If $|H| = 3$ then $H = \{1, a^2, a^4\}$. If $|H| = 4$ then $H = \{1, a^3, b, a^3b\}$,  $\{1, a^3, ab, a^4b\}$ or $\{1,a^3,a^2b,a^5b\}$. If $|H|=6$ then $H = \{1, a, a^2, a^3, a^4, a^5\}$, $\{1, a^2, a^4, b, a^2b, a^4b\}$ or $\{1,a^2,a^4,ab,a^3b,a^5b\}$. Note that $\Delta_{H,D_{12}}^g$ is an empty graph if $H = Z(D_{12})$. If $H = \langle a^rb \rangle$ (for $1 \leq r \leq 6$),  $\{1, a^2, a^4\}$,  $\{1, a^2, a^4, b, a^2b, a^4b\}$ or $\{1, a^2, a^4, ab, a^3b, a^5b\}$ then in each case the vertex $a^3$ is an isolated vertex in $G \setminus H$ (since $a^3 \in Z(D_{12})$) and hence $\Delta_{H,D_{12}}^g$ is disconnected. We have  $[a, b] = [b, a^5] = [a, ab] = [a^4, a^4b] = [a^5b, a^2] = [b,a^2] =[a^2b,a^5] = a^4 \ne 1$ and $[a^5, a^5b] = [a^5b, a] = [ab, a^4] = [a^4b, a] = [a^2, a^2b] = [a^2b, a] = a^2 \ne 1$. Therefore, if  $H = \{1, a^3, b, a^3b\}$ then the vertices $a$, $b$, $a^5$ and $a^5b$ make a cycle; if $H=\{1,a^3,ab,a^4b\}$ then the vertices $a$, $ab$, $a^4$ and $a^4b$ make a cycle; if $H=\{1, a^3, a^2b, a^5b\}$ then the vertices $a^2$, $a^2b$, $a^5$ and $a^5b$ make a cycle; and if  $H = \{1, a, a^2, a^3, a^4, a^5\}$ then the vertices $a$, $b$, $a^2$ and $a^2b$ make a cycle. 

\noindent \textbf{Case 6:}  $G \cong A_4 = \langle a, b : a^2 = b^3 = (ab)^3 = 1 \rangle$. If $|H| = 2$ then $H = \langle a\rangle$,  $\langle bab^2 \rangle$ or $\langle b^2ab\rangle$. Since the elements $a, bab^2$ and $b^2ab$  commute among themselves, in each case the remaining two elements in $G \setminus H$ remain isolated and hence $\Delta_{H,A_{4}}^g$ is disconnected. If $|H| = 3$ then $H = \langle x  \rangle$, where $x = b$, $ab$,  $ba$, $aba$. In each case, the vertices $x$, $a$, $x^{-1}$ and $bab^2$ make a cycle.
If $|H| = 4$ then $H = \{1, a, bab^2, b^2ab\}$. In this case, the vertices $a$, $b$, $bab^2$ and $ab$ make a cycle.
% Since none of the double transpositions commute with any of the six cycles of $3$ order. Therefore the vertices $x$, $y$, $x^{-1}$ and $z$ make a cycle for each subgroup $H$ of order $3$ and $4$ such that $y$ and $z$ are the three double transpositions and $y \ne z$.

\noindent \textbf{Case 7:} $G \cong Q_{12} = \langle a, b : a^6  = 1, b^2 = a^3 \text{ and } bab^{-1} = a^{-1}\rangle$. If $|H| = 2$ then  $H = Z(Q_{12})$ and so $\Delta_{H,Q_{12}}^g$ is an empty graph. If $|H|=3$ then $H = \{1, a^2, a^4\}$. In this case,  $a^3$ is an isolated vertex in $G \setminus H$ (since $a^3 \in Z(D_{12})$) and so $\Delta_{H,Q_{12}}^g$ is disconnected. If $|H|= 4$ then $H = \{1, a^3, b, a^3b\}$, $\{1, a^3, ab, a^4b\}$ or $\{1,a^3,a^2b,a^5b\}$. If $|H| = 6$ then $H =\{1, a, a^2, a^3, a^4, a^5\}$. We have $[a, b] = [a, ab] = [a^4, a^4b] = [a^5b, a^2] = [b, a^2] = [b, a^5] =[a^2b,a^5] = a^4 \ne 1$ and $[a^5, a^5b] = [a^5b, a] = [ab, a^4] = [a^4b, a] = [a^2, a^2b] = [a^2b, a] = a^2 \ne 1$. Therefore, if $H = \{1, a^3, b, a^3b\}$ then the vertices $a$, $b$, $a^5$ and $a^5b$ make a cycle; if $H = \{1, a^3, ab, a^4b\}$ then the vertices $a$, $ab$, $a^4$ and $a^4b$ make a cycle; if  $H = \{1, a^3, a^2b, a^5b\}$ then the vertices $a^2$, $a^2b$, $a^5$ and $a^5b$ make a cycle; and if $H = \{1, a, a^2, a^3, a^4, a^5\}$ then the vertices $a$, $b$, $a^2$ and $a^2b$ make a cycle.  This completes the proof.
\end{proof}

\begin{theorem}\label{not_a_tree2}
If $G$ is a non-abelian group of order $\leq 12$ and $g \ne 1$ then $\Delta_{H, G}^g$ is a tree if and only if $g^2 = 1$, $G \cong A_4$ and $|H|=2$ such that $H = \langle g \rangle$.
\end{theorem}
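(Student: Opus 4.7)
The plan is to prove the two directions of the biconditional separately: the backward direction is a quick direct verification in $A_4$, while the forward direction follows the case-by-case template of Theorem \ref{not_tree}.

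For the backward direction, assume $G \cong A_4$, $|H| = 2$, $g^2 = 1$ and $H = \langle g \rangle$. Then $g$ is a non-central involution in the derived subgroup $V_4$ of $A_4$, $Z(H, G) = \{1\}$, and the vertex set of $\Delta_{H, G}^g$ is $A_4 \setminus \{1\}$ ($11$ elements). Since $g$ is the unique non-identity element of $H$, every edge is incident to $g$. As $g = g^{-1}$, the only way $g$ and $y$ could fail to be adjacent is if $[g, y] = g$, which would force $y^{-1} g y = g^2 = 1$, impossible. Hence $\Delta_{H, G}^g \cong K_{1, 10}$, a tree.

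For the forward direction, suppose $\Delta_{H, G}^g$ is a tree with $g \ne 1$. The case $H = G$ is excluded: when $|G| = 12$ this is immediate from Theorem \ref{delta-not-tree} (since $|H| = 12 \notin \{2,3,4,6\}$), and when $|G| \in \{6, 8, 10\}$ from \cite[Theorem 2.5]{TEJ14}. Hence $H$ is a proper subgroup, and we examine each of the seven non-abelian groups of order at most $12$, namely $D_6, D_8, Q_8, D_{10}, D_{12}, Q_{12}$ and $A_4$, together with each admissible pair $(H, g)$ satisfying $H \ne Z(H, G)$ and $g \in K(H, G) \setminus \{1\}$. For every triple $(G, H, g)$ other than the one in the statement we exhibit either a $4$-cycle (ruling out the tree property) or an isolated vertex (ruling out connectivity).

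The key observation driving the enumeration is that, for $|H| = 2$ with $H = \{1, x\}$ and $x \notin Z(G)$, the graph is a subgraph of the star centred at $x$, so it is a tree iff $[x, y] \notin \{g, g^{-1}\}$ for all $y \in G \setminus \{1, x\}$. If $x$ is an involution and $g = x$, this condition is automatic since $[x, y] = x$ forces $y^{-1} x y = 1$. But $g = x$ requires $g \in K(H, G)$, and among the groups of order at most $12$, only in $A_4$ does a non-central involution lie in its own $K(\langle x \rangle, G)$; in the dihedral and dicyclic groups all commutators sit in the rotation subgroup $\langle a \rangle$, which contains no reflections. For all other $(H, g)$ with $|H| = 2$, a direct commutator calculation produces a vertex $y$ with $[x, y] \in \{g, g^{-1}\}$, hence an isolated vertex. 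For $|H| \ge 3$, explicit $4$-cycles are located inside $H$ or spanning $H$ and $G \setminus H$ using pairs whose commutators avoid $\{g, g^{-1}\}$, exactly as in the proof of Theorem \ref{not_tree}. The main obstacle is the sheer number of subcases; each individual check is routine given the commutator structure of the group.
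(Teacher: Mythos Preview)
Your overall strategy---verify the backward direction directly, then run the same group-by-group, subgroup-by-subgroup enumeration as in Theorem~\ref{not_tree} for the forward direction---is exactly the paper's approach, and your argument for the backward direction (showing $[g,y]=g$ forces $y^{-1}gy=1$) is in fact cleaner than the paper's bare assertion that $[g,x]\ne g$.

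There is, however, a genuine gap in your case split for the forward direction. You assert that for $|H|\ge 3$ one always finds a $4$-cycle, ``exactly as in the proof of Theorem~\ref{not_tree}''. This is false when $g\ne 1$. Take $G=D_8=\langle a,b: a^4=b^2=1,\ bab^{-1}=a^{-1}\rangle$, $H=\langle a\rangle$, and $g=a^2$. The vertex set is $\{a,a^3,b,ab,a^2b,a^3b\}$; the only vertices in $H$ are $a,a^3$, and since $[a,a^rb]=[a^3,a^rb]=a^2=g$ for every $r$, neither $a$ nor $a^3$ is adjacent to any reflection. The graph therefore consists of the single edge $a\sim a^3$ together with four isolated vertices, so there is no cycle of any length. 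The same phenomenon occurs for all order-$4$ subgroups of $D_8$ and $Q_8$ (with $g=a^2$) and for the order-$4$ subgroups of $D_{12}$ and $Q_{12}$ (with $g=a^2$ or $a^4$): in each case the paper disposes of the subgroup by exhibiting an isolated vertex, not a cycle.

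The reason your appeal to Theorem~\ref{not_tree} misfires is that the adjacency relation flips: for $g=1$ two elements of $H$ are adjacent iff they do \emph{not} commute, whereas for $g\ne 1$ commuting elements of $H$ \emph{are} adjacent while those with commutator $g^{\pm 1}$ are not. So the $4$-cycles found in Theorem~\ref{not_tree} typically become non-edges here. Your dichotomy needs to be refined: for $|H|\ge 3$ some cases yield triangles or $4$-cycles (e.g.\ the cyclic and large subgroups of $D_{10}$, $D_{12}$, $Q_{12}$, and the subgroups of $A_4$), but others---notably the index-$2$ subgroups of $D_8$, $Q_8$ and the order-$4$ subgroups of $D_{12}$, $Q_{12}$---require the isolated-vertex argument instead.
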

\begin{proof}
If $H$ is the trivial subgroup of $G$ then $\Delta_{H, G}^g$ is an empty graph. 
If $H=G$ then, by {\rm\cite[Theorem 2.5]{TEJ14}}, we have $\Delta_{H, G}^g$ is not a tree. So we examine only the proper subgroups of $G$, where  $G \cong D_6, D_8, Q_8, D_{10}, D_{12}, Q_{12}$ or $A_4$. We consider the following two cases.

\noindent {\bf Case 1:}  $g^2 = 1$

%If $g^2 = 1$ then
In this case  $G \cong D_8$, $Q_8$ or $A_4$. 
%\noindent \textbf{Case 1:}  
If $G \cong D_8=\langle a, b: a^4 = b^2 = 1 \text{ and } bab^{-1} = a^{-1}\rangle$ then $g = a^2$ and $|H|= 2, 4$. If $|H|=2$ then $H = Z(D_8)$ or $\langle a^rb \rangle$, for every integer $r$ such that $1 \leq r \leq 4$. For $H = Z(D_8)$, $\Delta_{H, D_8}^g$ is an empty graph. For $H = \langle a^rb \rangle$, in each case  $a$ is an isolated vertex in $G \setminus H$ (since $[a, a^rb]=a^2$) and hence, $\Delta_{H, D_{8}}^g$ is disconnected. If $|H|=4$ then $H = \{1, a, a^2, a^3\}$, $\{1, a^2, b, a^2b\}$ or $\{1, a^2, ab, a^3b\}$. For $H = \{1, a, a^2, a^3\}$, $b$ is an isolated vertex in $G \setminus H$ (since $[a, b]=a^2=[a^3, b]$) and hence, $\Delta_{H, D_{8}}^g$ is disconnected. If $H = \{1, a^2, b, a^2b\}$ or $\{1, a^2, ab,  a^3b\}$ then $a$ is an isolated vertex in $G \setminus H$ (since $[a,a^rb] = a^2$ for every integer $r$ such that $1 \leq r \leq 4$) and hence, $\Delta_{H, D_{8}}^g$ is disconnected.

%\noindent \textbf{Case 2:}
  If $G \cong Q_8 = \langle a, b : a^4 = 1, b^2 = a^2 \text{ and } bab^{-1} = a^{-1}\rangle$ then $g = a^2$ and $|H|= 2, 4$. If $|H| = 2$ then $H = Z(Q_8)$ and hence  $\Delta_{H, Q_8}^g$ is an empty graph. If $|H| = 4$ then $H = \{1, a, a^2, a^3\}$, $\{1, a^2, b, a^2b\}$ or $\{1, a^2, ab, a^3b\}$. In each case, vertices of $H \setminus Z(H, G)$ commute with each other and commutator of these vertices and those of $G \setminus H$ equals $a^2$. Hence, the vertices in $G \setminus H$ remain isolated and so  $\Delta_{H, Q_{8}}^g$ is disconnected.
	
%	 so $\Delta_{H,Q_8}^g$ consists of one edge between the vertices of $H \setminus Z(H,G)$ and vertices of $G \setminus H$ remain isolated and hence, $\Delta_{H,Q_{8}}^g$ is disconnected. 

%\noindent \textbf{Case 3:}   

If $G \cong A_4 = \langle a, b : a^2 = b^3 = (ab)^3 = 1\rangle$ then  $g \in \{a, bab^2, b^2ab\}$ and $|H|= 2, 3, 4$. 
%We have  $a^2=(bab^2)^2 = (b^2ab)^2 = 1$. 
If $|H|=2$ then $H = \langle a\rangle$, $\langle bab^2\rangle$ or $\langle b^2ab \rangle$.
%Suppose that $g = a, bab^2$ or  $b^2ab$ then $[g, x] \ne g$ for all $x \in G \setminus H$. Hence, $\Delta_{H, A_{4}}^g$ is a star graph. If $g \ne a, bab^2, b^2ab$ then as shown in Figures 1.1 -- 1.6,  $\Delta_{H, A_{4}}^g$ is not a tree.
If $H = \langle g \rangle$ then $\Delta_{H,A_{4}}^g$ is a star graph because $[g,x]\ne g$ for all $x \in G \setminus H$ and hence a tree; otherwise $\Delta_{H,A_{4}}^g$ is not a tree as shown in Figures 1.1--1.6. 
If $|H| = 3$ then $H = \langle x\rangle$, where $x = b,  ab, ba, aba$ or their inverses. We have $[x, x^{-1}] = 1$, $[x, g] \ne g$ and $[x^{-1}, g] \ne g$. Therefore,   $x$, $x^{-1}$ and $g$ make a triangle for each such subgroup in the graph $\Delta_{H, A_{4}}^g$.
% as $[x,x^{-1}]=1$ and $[x,g] \ne g$, $[x^{-1},g] \ne g$. 
If $|H|=4$ then $H = \{1, a, bab^2, b^2ab\}$. Since $H$ is abelian, the vertices $a$, $bab^2$ and $b^2ab$ make a triangle in the graph $\Delta_{H, A_{4}}^g$.
%  and  which lead to a triangle of these three double transpositions in $\Delta_{H,A_{4}}^g$ as double transpositions commute among themselves.

\noindent {\bf Case 2:}  $g^2 \ne 1$

%If $g^2 \ne 1$ then 
In this case $G \cong D_6$, $D_{10}$, $D_{12}$ or $Q_{12}$.

%\noindent \textbf{Case 1:}  
If $G \cong D_6 = \langle a, b : a^3 = b^2 = 1 \text{ and } bab^{-1} = a^{-1}\rangle$ then  $g \in \{a, a^2\}$ and $|H| = 2, 3$. We have $\Delta_{H,D_6}^a = \Delta_{H,D_6}^{a^2}$ since  $a^{-1} = a^2$. If $|H| = 2$ then $H = \langle x\rangle$, where $x = b, ab$ and $a^2b$. 
%$H = \langle b \rangle$ or $\langle ab \rangle$ or $\langle a^2b \rangle$. 
%Here $H \setminus Z(H,G)$ equals $b$, $ab$ and $a^2b$ respectively and commutator of all these vertices and those of $G \setminus H$ equals to $g$ or $g^{-1}$ in each case. 
We have $[x, y] \in \{g, g^{-1}\}$ for all $y \in G \setminus H$ and so $\Delta_{H, D_{6}}^g$ is an empty graph. If $|H| = 3$ then $H = \{1, a, a^2\}$. In this case, the vertices of $G \setminus H$ remain isolated since for $y \in G \setminus H$ we have $[a, y], [a^2, y] \in \{g, g^{-1}\}$.
If $G \cong D_{10}=\langle a, b : a^5 = b^2 = 1 \text{ and } bab^{-1} = a^{-1}\rangle$ then $g \in \{a, a^2, a^3, a^4\}$ and $|H| = 2, 5$.
We have $\Delta_{H,D_{10}}^a = \Delta_{H,D_{10}}^{a^4}$ and $\Delta_{H,D_{10}}^{a^2} = \Delta_{H,D_{10}}^{a^3}$  since  $a^{-1}=a^4$ and $(a^2)^{-1}=a^3$. 
Suppose that $|H| = 2$. Then $H = \langle a^rb \rangle$, for every integer $r$ such that $1 \leq r \leq 5$. If $g = a$ then for each subgroup $H$,   $a^2$ is an isolated vertex in $\Delta_{H,D_{10}}^g$ (since $[a^2,a^rb]=a^4$ for every integer $r$ such that $1 \leq r \leq 5$). If $g = a^2$ then for each subgroup $H$,  $a$ is an isolated vertex in $\Delta_{H,D_{10}}^g$ (since $[a,a^rb]=a^2$ for every integer $r$ such that $1 \leq r \leq 5$). Hence, $\Delta_{H,D_{10}}^g$ is disconnected for each $g$ and each subgroup $H$ of order $2$. Now suppose that $|H|=5$. Then we have $H=\{1,a,a^2,a^3,a^4\}$. In this case,  the vertices $a$, $a^2$, $a^3$ and $a^4$ make a cycle in $\Delta_{H,D_{10}}^g$ for each $g$ as they commute among themselves.

%\noindent \textbf{Case 3:} 
If $G \cong D_{12}=\langle a,b~|~a^6=b^2=1 \text{ and } bab^{-1}=a^{-1}\rangle$ then  $g \in \{a^2,a^4\}$ and $|H|=2, 3, 4, 6$. We have $\Delta_{H,D_{12}}^{a^2} = \Delta_{H,D_{12}}^{a^4}$  since $(a^2)^{-1}=a^4$. Suppose that $|H|=2$ then $H = Z(D_{12})$ or $\langle a^rb \rangle$, for every integer $r$ such that $1 \leq r \leq 6$. For $H = Z(D_{12})$, $\Delta_{H,D_{12}}^g$ is an empty graph. For $H = \langle a^rb \rangle$, in each case  $a$ is an isolated vertex in $G \setminus H$ (since $[a,a^rb]=a^2$ for every integer $r$ such that $1 \leq r \leq 6$) and hence, $\Delta_{H,D_{12}}^g$ is disconnected.  If $|H|=3$ then  $H=\{1,a^2,a^4\}$. In this case, the vertices $a$, $a^2$ and $a^4$ make a triangle in $\Delta_{H,D_{12}}^g$ since they commute among themselves. If $|H|=4$ then $H = \{1,a^3,b,a^3b\}$, $\{1,a^3,ab,a^4b\}$ or $\{1,a^3,a^2b,a^5b\}$. For all these  $H$,  $a$ is an isolated vertex in $G \setminus H$ (since $[a,a^rb]=a^2$ for every integer $r$ such that $1 \leq r \leq 6$) and hence, $\Delta_{H,D_{12}}^g$ is disconnected. If $|H|=6$ then $H=\{1,a,a^2,a^3,a^4,a^5\}$, $\{1,a^2,a^4,b,a^2b,a^4b\}$ or $\{1,a^2,a^4,ab,a^3b,a^5b\}$. For all these $H$ the vertices $a$, $a^2$, $a^4$ and $a^5$ make a cycle in $\Delta_{H,D_{12}}^g$ since they commute among themselves. 

%\noindent \textbf{Case 4:} 
If $G \cong Q_{12}=\langle a,b~|~a^6=1, b^2=a^3 \text{ and } bab^{-1}=a^{-1}\rangle$ then  $g \in \{a^2,a^4\}$ and $|H|=2, 3, 4, 6$. We have $\Delta_{H,D_{12}}^{a^2} = \Delta_{H,D_{12}}^{a^4}$  since $(a^2)^{-1}=a^4$. If $|H|=2$ then $H = Z(Q_{12})$ and so  $\Delta_{H,Q_{12}}^g$ is an empty graph. If $|H|=3$ then $H=\{1,a^2,a^4\}$. In this case, the vertices $a$, $a^2$ and $a^4$ make a triangle in $\Delta_{H,Q_{12}}^g$ since they commute among themselves. If $|H|=4$ then $H = \{1,a^3,b,a^3b\}$, $\{1,a^3,ab,a^4b\}$ or $\{1,a^3,a^2b,a^5b\}$. For all these $H$,  $a$ is an isolated vertex in $G \setminus H$ (since $[a,a^rb]=a^2$ for every integer $r$ such that $1 \leq r \leq 6$) and hence, $\Delta_{H,Q_{12}}^g$ is disconnected. If $|H|=6$ then $H=\{1,a,a^2,a^3,a^4,a^5\}$. In this case, the vertices $a$, $a^2$, $a^4$ and $a^5$ make a cycle in $\Delta_{H,Q_{12}}^g$ since they commute among themselves.
\end{proof}

\section{Connectivity of diameter}
Connectivity of $\Delta_{G}^g$ was studied in \cite{NEGJ16,NEGJ17,NEM18}. It was conjectured that  diameter of $\Delta_{G}^g$ is equal to $2$ if $\Delta_{G}^g$ is connected. In this section we discuss  the connectivity of $\Delta_{H, G}^g$. In general, $\Delta_{H, G}^g$ is not connected. For any two vertices $x$ and $y$, we write $x \sim y$ and $x \nsim y$ respectively to mean that they are adjacent or not.   We write $d(x, y)$ and $\diam(\Delta_{H, G}^g)$ to denote the distance between the vertices $x, y$ and  diameter of $\Delta_{H, G}^g$ respectively. 

\begin{theorem}\label{Ind-diam-1}
If $g$ is a non-central element of $G$ such that $g \in H$ and $g^2 = 1$ then $\diam(\Delta^g_{H,G}) = 2$. 
\end{theorem}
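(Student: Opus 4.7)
The plan is to use $g$ itself as a central hub: show that $g$ is a vertex of $\Delta^g_{H,G}$ and is adjacent to every other vertex. This would immediately yield connectivity together with $\diam(\Delta^g_{H,G}) \leq 2$; then I would exhibit a non-adjacent pair to show the bound is tight.

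For the hub step, $g \in H$ by hypothesis and $g$ is non-central, so $g \in H \setminus Z(H,G)$ is a vertex. For any other vertex $x$, the clause ``one of the endpoints lies in $H$'' holds automatically because $g \in H$, so I only need to verify the commutator condition. Since $g^2 = 1$ we have $g^{-1} = g$, so it suffices to rule out $[x,g] = g$. Setting $[x,g] = x^{-1}g^{-1}xg$ equal to $g$ and multiplying on the right by $g^{-1}$ yields $x^{-1}g^{-1}x = 1$, forcing $g = 1$ and contradicting non-centrality. Hence $g \sim x$ for every vertex $x \neq g$, so the graph is connected and has diameter at most $2$.

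To show $\diam = 2$ rather than $1$, I would produce two non-adjacent vertices, splitting on whether $H$ is proper. If $H \subsetneq G$, then by Lagrange $|G \setminus H| \geq |H| \geq 2$ (since $g$ is a nontrivial element of $H$), so there are two distinct elements of $G \setminus H \subseteq G \setminus Z(H,G)$; these are non-adjacent because neither lies in $H$. If $H = G$, the standing hypothesis $g \in K(H,G)$ supplies $x, y$ with $[x,y] = g$; since $g \neq 1$, these $x, y$ are distinct non-commuting elements, hence lie in $G \setminus Z(G) = G \setminus Z(H,G)$, and are non-adjacent because their commutator equals $g$.

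The only subtle spot is the case $H = G$, where the ``both vertices outside $H$'' trick is unavailable and one must instead invoke the assumption $g \in K(H,G)$. Otherwise the argument is essentially bookkeeping once one notices that $g^2 = 1$ collapses the two forbidden commutator values $\{g, g^{-1}\}$ into a single value, making the hub verification a one-line computation.
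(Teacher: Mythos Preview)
Your argument is correct and follows the same core idea as the paper's proof: show that $g$ is adjacent to every other vertex, so that $g$ serves as a universal hub and $\diam \leq 2$. The paper's version simply asserts $[x,g] \neq g$ (implicitly using the same computation you wrote out) and then concludes.

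Where your proposal actually goes beyond the paper is in establishing the lower bound $\diam \geq 2$. The paper's proof stops after the hub step and declares $\diam(\Delta^g_{H,G}) = 2$ without exhibiting a non-adjacent pair; your case split on $H \subsetneq G$ (two vertices in $G \setminus H$, automatically non-adjacent by the definition of the edge set) versus $H = G$ (invoke the standing hypothesis $g \in K(H,G)$ to get $x,y$ with $[x,y] = g$) fills that gap cleanly. So your write-up is in fact more complete than the published one, while following the same strategy.
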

\begin{proof}
Let $x \ne g$ be any vertex of $\Delta^g_{H,G}$. Then $[x,g] \ne g$ which implies $[x,g] \ne g^{-1}$ since $g^2 = 1$. Since $g \in H$, if follows that $x \sim g$. Therefore, $d(x,g) = 2$ and hence $\diam(\Delta^g_{H,G}) = 2$.
\end{proof}
\begin{lemma}\label{Ind-diam-2}
Let $g \in H \setminus Z(H,G)$ such that $g^2 \ne 1$ and $o(g) \neq 3$. If $x \in G \setminus Z(H,G)$ and $x \nsim g$ then  $x \sim g^2$.
\end{lemma}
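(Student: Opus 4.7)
My plan is to exploit the fact that $g$ is itself a vertex of $\Delta^{g}_{H,G}$ (since $g \in H \setminus Z(H,G)$) and that $g \in H$ makes the ``membership'' side of the adjacency condition automatic. Thus for any vertex $x \ne g$, the non-adjacency $x \nsim g$ is forced to come from the commutator condition $[x,g] \in \{g, g^{-1}\}$. The first possibility, $[x,g]=g$, collapses immediately: expanding $x^{-1}g^{-1}xg = g$ and cancelling $g$ on the right yields $x^{-1}g^{-1}x = 1$, hence $g = 1$, contradicting $g \notin Z(H,G)$. So I only need to handle $[x,g] = g^{-1}$, which rearranges into the key identity
\[
x^{-1}gx = g^{2}.
\]
Everything else should follow from squaring or conjugating this single equation.

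Next I would verify that $g^{2}$ actually belongs to the vertex set, i.e., $g^{2} \in G \setminus Z(H,G)$. Clearly $g^{2} \in H$ because $H$ is a subgroup. If $g^{2}$ were in $Z(H,G)$, then $g^{2}$ would commute with $x$, so $xg^{2}x^{-1} = g^{2}$; but conjugating the key identity by $x$ also gives $xg^{2}x^{-1} = x(x^{-1}gx)x^{-1} = g$. The two evaluations force $g = g^{2}$, hence $g = 1$, contradiction. The same key identity also shows $x \ne g^{2}$: otherwise it would reduce to $g^{-2}gg^{2} = g = g^{2}$.

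The last step is a direct commutator computation. Using $x^{-1}g^{-1}x = g^{-2}$ (the inverse of the key identity),
\[
[x, g^{2}] = x^{-1}g^{-2}xg^{2} = (x^{-1}g^{-1}x)^{2}g^{2} = g^{-4}g^{2} = g^{-2}.
\]
The two hypotheses of the lemma then enter precisely where expected: $g^{-2} = g^{-1}$ would force $g = 1$, excluded since $g \notin Z(H,G)$; and $g^{-2} = g$ would force $g^{3} = 1$, excluded by $o(g) \ne 3$ (since $g \ne 1$, this would force $o(g) = 3$). Combined with $g^{2} \in H$ and $x \ne g^{2}$, this yields $x \sim g^{2}$.

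Since every step is essentially a one-line manipulation of the single identity $x^{-1}gx = g^{2}$, there is no real obstacle in this proof. The only subtle point is verifying that $g^{2}$ sits in $G \setminus Z(H,G)$ (so that the conclusion is even meaningful), which is exactly where the key identity is used rather than merely the hypothesis $g^{2} \ne 1$.
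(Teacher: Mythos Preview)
Your proof is correct and follows essentially the same route as the paper: both arguments deduce $[x,g]=g^{-1}$ from non-adjacency, compute $[x,g^{2}]=g^{-2}$, check that $g^{2}\in H\setminus Z(H,G)$, and conclude $x\sim g^{2}$. The only cosmetic differences are that the paper invokes the commutator identity $[x,g^{2}]=[x,g][x,g]^{g}$ rather than your direct conjugation calculation, and it rules out $g^{2}\in Z(H,G)$ via the hypothesis $g^{2}\ne 1$ (since then $[x,g^{2}]=1$ forces $g^{-2}=1$) rather than via your conjugation trick yielding $g=g^{2}$; you are also a bit more careful in explicitly dispatching the case $[x,g]=g$ and in checking $x\ne g^{2}$.
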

\begin{proof}
Since $g \ne 1$ and $x \nsim g$ it follows that   $[x, g] = g^{-1}$. We have 
\begin{equation}\label{Ind-eq-01}
[x,g^2]= [x, g][x, g]^g = g^{-2} \ne g, g^{-1}.
\end{equation}
%Clearly $g^2 \in H$. 
If $g^2 \in Z(H,G)$ then, by \eqref{Ind-eq-01},  we have $g^{-2} = [x,g^2] = 1$; a contradiction. Therefore, $g^2 \in H\setminus  Z(H,G)$. Hence, $x \sim g^2$. 
\end{proof}
\begin{theorem}
Let $g \in H \setminus Z(H,G)$ and $o(g) \ne 3$. Then $\diam(\Delta^g_{H,G}) \leq 3$. 
\end{theorem}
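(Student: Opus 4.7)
The plan is to split on whether $g^2 = 1$ and, in the remaining case, to use $g$ and $g^2$ as two ``hub'' vertices that between them are adjacent to every other vertex. If $g^2 = 1$ then $g$ is automatically non-central in $G$ (since $g \in H \setminus Z(H,G)$ forces $g \notin Z(G)$), so Theorem \ref{Ind-diam-1} immediately gives $\diam(\Delta^g_{H,G}) = 2 \le 3$ and there is nothing to do. I may therefore assume from now on that $g^2 \ne 1$, so that Lemma \ref{Ind-diam-2} is available for the remainder of the argument.

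First I would record two preliminary facts. If every vertex of $\Delta^g_{H,G}$ happens to be adjacent to $g$, then $g$ is a universal vertex and $\diam \le 2$, so assume otherwise and pick some vertex $x$ with $x \nsim g$; the proof of Lemma \ref{Ind-diam-2} then places $g^2 \in H \setminus Z(H,G)$, so $g^2$ is itself a vertex of the graph. Moreover $g \sim g^2$ in $\Delta^g_{H,G}$, because $g, g^2 \in H$, $[g,g^2] = 1$, and $1 \ne g, g^{-1}$ (as $g \ne 1$). Combining Lemma \ref{Ind-diam-2} with these observations yields the key covering property: every vertex $v$ of $\Delta^g_{H,G}$ lies in $N[g] \cup N[g^2]$. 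Indeed, either $v = g$ or $v \sim g$, putting $v \in N[g]$; or else $v \ne g$ and $v \nsim g$, in which case Lemma \ref{Ind-diam-2} gives $v \sim g^2$ and so $v \in N[g^2]$.

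Given any two vertices $x, y$ of $\Delta^g_{H,G}$, one of three cases must then occur. If both lie in $N[g]$, the path through $g$ gives $d(x,y) \le 2$; symmetrically, if both lie in $N[g^2]$, the path through $g^2$ gives $d(x,y) \le 2$. Otherwise, after swapping $x$ and $y$ if necessary, $x \in N[g] \setminus N[g^2]$ and $y \in N[g^2] \setminus N[g]$, and the path $x \text{--} g \text{--} g^2 \text{--} y$, which uses the edge $g \sim g^2$ established above, has length $3$. In every case $d(x,y) \le 3$, so $\diam(\Delta^g_{H,G}) \le 3$. The only real care point is verifying that $g^2$ is actually a vertex and that $g \sim g^2$; beyond these easy checks no new computation is needed, since all the adjacency work has already been done inside Lemma \ref{Ind-diam-2}.
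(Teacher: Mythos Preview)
Your proof is correct and rests on the same two pillars as the paper's argument: Lemma \ref{Ind-diam-2} together with the edge $g \sim g^2$. The difference is one of organisation. The paper fixes two non-adjacent vertices $x,y$ and then runs a threefold case split on whether $x,y$ lie in $H$ or in $G\setminus H$; in the cases where both vertices are in $H$ (or the one not adjacent to $g$ is in $H$) it introduces an auxiliary element $xg$ or $yg$ and checks $[x,yg]$, $[y,yg]$, etc., to obtain $d(x,y)=2$ rather than just $\le 3$. You bypass all of this by observing once and for all that $\{g,g^2\}$ is a connected dominating set: every vertex lies in $N[g]\cup N[g^2]$ by Lemma \ref{Ind-diam-2}, and $g\sim g^2$, so any two vertices are joined by a path of length at most $3$ through these hubs. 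Your treatment of the edge case where $g^2$ might fail to be a vertex (handled by noting that if no vertex misses $g$ then $g$ is already universal) is cleaner than leaving it implicit. The price you pay is that you do not recover the sharper $d(x,y)=2$ that the paper gets in several subcases, but since the theorem only asserts $\diam\le 3$, nothing is lost.
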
 
\begin{proof}
If $g^2 = 1$ then, by Theorem \ref{Ind-diam-1}, we have $\diam(\Delta^g_{H,G}) = 2$. Therefore, we assume that $g^2 \ne 1$. Let $x, y$ be any two vertices of $\Delta^g_{H,G}$ such that $x \nsim y$. Therefore, $[x, y] = g$ or $g^{-1}$. If $x \sim g$ and $y \sim g$ then $x \sim g \sim y$ and so $d(x, y) = 2$. If $x \nsim g$ and $y \nsim g$ then, by Lemma \ref{Ind-diam-2}, we have $x \sim g^2 \sim y$ and so $d(x, y) = 2$. Therefore, we shall  not consider these two situations in the following cases.

\noindent \textbf{Case 1:}  $x, y \in H$ 

 Suppose that one of $x, y$ is adjacent to $g$ and the other is not. Without any loss we assume that $x \nsim g$ and $y \sim g$. Then
$[x, g] = g^{-1}$ and $[y, g] \ne g, g^{-1}$.
By  Lemma \ref{Ind-diam-2}, we have $x \sim g^2$.

  Consider the element $yg \in H$. If $yg \in Z(H,G)$ then $[y, g^2] = 1 \ne g, g^{-1}$. Therefore, $x \sim g^2 \sim y$ and so $d(x, y) = 2$. 

   If $yg \notin Z(H,G)$ then  we have $[x,yg] = [x, g][x, y]^g = g^{-1}[x, y]^g \ne g, g^{-1}$.  
 Also, $[y, yg] = [y, g] \ne g, g^{-1}$. Hence, $x \sim yg \sim y$ and so $d(x, y) = 2$.

\noindent \textbf{Case 2:} One of $x, y$ belongs to $H$ and the other does not.

Without any loss assume that $x \in H$ and $y \notin H$. 
If $x \nsim g$ and $y \sim g$ 
then, by  Lemma \ref{Ind-diam-2}, we have $x \sim g^2$. Also, $[g, g^2] = 1 \ne g, g^{-1}$ and so $g^2 \sim g$. Therefore, $x \sim g^2 \sim g \sim y$ and hence   $d(x,y) \leq 3$.  
If $x \sim g$ and $y \nsim g$ then 
$[x, g] \ne g,g^{-1}$ and $[y, g] =  g^{-1}$. By  Lemma \ref{Ind-diam-2}, we have $y \sim g^2$. Consider the element $xg \in H$.  If $xg \in Z(H,G)$ then $[x, g^2] = 1 \ne g, g^{-1}$. Therefore, $x \sim g^2$ and  so $y \sim g^2 \sim x$. Thus $d(x, y) = 2$. 

 If $xg \notin Z(H,G)$ then  we have $[y, xg] = [y, g][y, x]^g = g^{-1}[y, x]^g \ne g, g^{-1}$. 
 Also, $[x, xg] = [x, g] \ne g, g^{-1}$. Hence, $y \sim xg \sim x$ and so $d(x, y) = 2$.

\noindent \textbf{Case 3:} $x, y \notin H$.

 Suppose that one of $x, y$ is adjacent to $g$ and the other is not. Without any loss we assume that $x \nsim g$ and $y \sim g$. Then,
by  Lemma \ref{Ind-diam-2}, we have $x \sim g^2$.
Also, $[g, g^2] = 1 \ne g, g^{-1}$ and so $g^2 \sim g$. Therefore, $x \sim g^2 \sim g \sim y$ and hence   $d(x,y) \leq 3$.

Thus $d(x,y) \leq 3$ for all $x, y \in G \setminus Z(H,G)$.  Hence the result follows. 
\end{proof}

The rest part of this paper is devoted to the study of connectivity  of $\Delta_{H, D_{2n}}^g$,
where $D_{2n} = \langle a, b: a^n = b^2 = 1, bab^{-1} = a^{-1} \rangle$ is the dihedral group of order $2n$. It is well-known that $Z(D_{2n}) = \{1\}$, the commutator subgroup $D_{2n}' = \langle a \rangle$ if $n$ is odd and $Z(D_{2n}) = \{1,a^\frac{n}{2}\}$ and $D_{2n}' = \langle a^2 \rangle$ if $n$ is even. By  \cite[Theorem 4]{NEGJ17}, it follows that $\Delta_{H, D_{2n}}^g$ is disconnected if $n = 3, 4, 6$. Therefore, we consider  $n \geq 8$ and $n \geq 5$ according as $n$ is even or odd in the following results.

\begin{theorem}
Consider the graph $\Delta_{H, D_{2n}}^g$, where $n \, (\geq 8)$ is even.   
\begin{enumerate}
\item If $H =  \langle a \rangle $ then $\Delta_{H, D_{2n}}^g$ is connected and $\diam(\Delta_{H, D_{2n}}^g) = 2$. 

\item Let $H =  \langle a^\frac{n}{2}, a^rb   \rangle $  for $0 \leq r < \frac{n}{2}$. Then $\Delta_{H, D_{2n}}^g$ is connected with diameter $2$ if $g = 1$ and  $\Delta_{H, D_{2n}}^g$ is not connected if $g \ne 1$. 

\item If $H = \langle a^rb \rangle$ for $1 \leq r \leq n$ then $\Delta_{H, D_{2n}}^g$ is not connected. 
\end{enumerate}
\end{theorem}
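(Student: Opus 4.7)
The plan is to treat each of (a), (b), (c) by direct adjacency analysis, using the explicit commutator formulas $[a^i,a^jb]=a^{-2i}$ and $[a^sb,a^rb]=a^{2(s-r)}$ in $D_{2n}$, together with the fact that rotations commute among themselves. In each case I would first compute $Z(H,G)$, which fixes the vertex set $G\setminus Z(H,G)$ and, more importantly, the small subset $H\setminus Z(H,G)$ of possible $H$-endpoints of any edge; connectedness (or its failure) and the diameter are then read off directly.

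For part (a), with $H=\langle a\rangle$ and $Z(H,G)=\{1,a^{n/2}\}$, I would split on $g$. When $g=1$, the rule reduces to ``one endpoint is a nontrivial rotation, the other a reflection, and they do not commute,'' and since $[a^i,a^jb]=a^{-2i}\ne 1$ whenever $i\ne 0,n/2$, the graph is the complete bipartite graph on the parts $\{a^i:i\ne 0,n/2\}$ and $\{a^jb:0\le j<n\}$; both parts contain at least two vertices (as $n\ge 8$), so the diameter is exactly $2$. When $g\ne 1$, I would produce a universal vertex $a^k\in H\setminus Z(H,G)$: any $k\notin\{0,n/2\}$ with $a^{-2k}\notin\{g,g^{-1}\}$ satisfies $[a^k,a^i]=1\ne g,g^{-1}$ and $[a^k,a^jb]=a^{-2k}\ne g,g^{-1}$, making $a^k$ adjacent to every other vertex. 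Existence of such $k$ is a counting step: each of the equations $a^{-2k}=g$ and $a^{-2k}=g^{-1}$ has at most two solutions modulo $n$, and $k\in\{0,n/2\}$ contributes two more forbidden values, for at most six bad $k$ out of $n$; the bound $n\ge 8$ is used precisely here, and this is the only genuinely delicate point of the theorem.

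For part (b), $H=\{1,a^{n/2},a^rb,a^{n/2+r}b\}$ gives $Z(H,G)=\{1,a^{n/2}\}$, so $H\setminus Z(H,G)=\{a^rb,a^{n/2+r}b\}$ and every edge touches this pair. When $g=1$, the commutator formulas show that $a^rb$ is adjacent to every other vertex except $a^{n/2+r}b$ (the two commute since $[a^rb,a^{n/2+r}b]=a^n=1$), and the symmetric statement holds for $a^{n/2+r}b$; connectedness and diameter $2$ follow at once, since any two non-$H$ vertices meet through $a^rb$. When $g\ne 1$, write $g=a^{2m}$ with $m\not\equiv 0\pmod{n/2}$; because $[a^i,a^rb]=[a^i,a^{n/2+r}b]=a^{-2i}$, the choice $i=m$ gives $[a^m,a^rb]=g^{-1}=[a^m,a^{n/2+r}b]$, and $a^m$, a vertex outside $H$ since $m\not\equiv 0,n/2\pmod n$, is adjacent to neither anchor and is therefore isolated.

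Part (c) is the same isolation principle in miniature: $H\setminus Z(H,G)=\{a^rb\}$ is the only possible $H$-endpoint of any edge, so exhibiting a single isolated vertex disconnects the graph. If $g=1$, the central element $a^{n/2}$ commutes with $a^rb$ and is isolated; if $g\ne 1$, writing $g=a^{2m}$ as above, the vertex $a^m$ satisfies $[a^m,a^rb]=g^{-1}$ and is likewise isolated. Thus the only obstacle across the whole theorem is the counting argument for the universal vertex in (a); everything else reduces to mechanical substitution into the two commutator formulas.
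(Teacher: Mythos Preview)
Your proposal is correct and follows essentially the same strategy as the paper: in each part you identify the anchor set $H\setminus Z(H,G)$, compute the relevant commutators, and either exhibit a vertex adjacent to everything else or an isolated vertex. The only cosmetic differences are that in (a) with $g\ne 1$ you bound the number of bad $k$'s abstractly (at most six) whereas the paper lists them explicitly as $i,\ n/2+i,\ n-i,\ n/2-i$ (or $n/4,\ 3n/4$ when $g=g^{-1}$), and in (c) with $g=1$ you isolate the central element $a^{n/2}$ while the paper isolates the reflection $a^{n/2+r}b$; both choices work equally well.
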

\begin{proof}
Since $n$ is even we have $g = a^{2i}$ for  $1 \leq i \leq \frac{n}{2}$.

\noindent (a) \textbf{Case 1:} $g=1$

Since $H$ is abelian, the induced subgraph of $\Delta_{H, D_{2n}}^g$ on $H \setminus Z(H, D_{2n})$ is empty. So we need to see the adjacency of these vertices with those in $D_{2n} \setminus H$. Suppose that $[a^rb, a^j]=1$ and $[b,a^j]=1$ for every integers $r,j$ such that $1 \leq r,j \leq n-1$.  Then $a^{2j} = a^0$ or $a^{n}$ and so $j=0$ or $j=\frac{n}{2}$.
Therefore, every vertex in $H \setminus Z(H, D_{2n})$ is adjacent to all the vertices in $D_{2n} \setminus H$.   Thus $\Delta_{H, D_{2n}}^g$ is connected and $\diam(\Delta_{H, D_{2n}}^g) = 2$.

\noindent \textbf{Case 2:} $g \ne 1$

Since $H$ is abelian, the induced subgraph of $\Delta_{H, D_{2n}}^g$ on $H \setminus Z(H, D_{2n})$ is a complete graph. Therefore, it is sufficient to prove that no vertex in $D_{2n} \setminus H$ is isolated.  
If $g \neq g^{-1}$ then $g \neq a^{\frac{n}{2}}$.
Suppose that $[a^rb, a^j]=g$ and $[b,a^j]=g$ for every integers $r,j$ such that $1 \leq r,j \leq n-1$.  Then $a^{2j} = a^{2i}$ and so $j=i$ or $j=\frac{n}{2}+i$. If $[a^rb, a^j]=g^{-1}$ and $[b,a^j]=g^{-1}$ for every integers $r,j$ such that $1 \leq r,j \leq n-1$ then $a^{2j} = a^{n -2i}$ and so $j=n-i$ or $j=\frac{n}{2}-i$. Therefore, there exists an  integer $j$ such that $1 \leq j \leq n-1$ and $j \neq i, \frac{n}{2}+i, n-i \text{ and } \frac{n}{2}-i$ for which $a^j$ is adjacent to all the  vertices in $D_{2n} \setminus H$.
If $g = g^{-1}$ then $g = a^{\frac{n}{2}}$. Suppose that $[a^rb, a^j]=g$ and $[b,a^j]=g$ for every integers $r,j$ such that $1 \leq r,j \leq n-1$ then $a^{2j} = a^{\frac{n}{2}}$ and so $j= \frac{n}{4}$ or $j=\frac{3n}{4}$.  Therefore, there exists an integer $j$ such that $1 \leq j \leq n-1$ and $j \neq \frac{n}{4} \text{ and } \frac{3n}{4}$ for which $a^j$ is adjacent to all the  vertices in $D_{2n} \setminus H$. Thus $\Delta_{H, D_{2n}}^g$ is connected and $\diam(\Delta_{H, D_{2n}}^g) = 2$.

\noindent (b)  \textbf{Case 1:} $g=1$

We have $[a^{\frac{n}{2}+r}b,a^rb] = 1$ for every integer $r$ such that $1 \leq r \leq n-1$. Therefore, the induced subgraph of $\Delta_{H, D_{2n}}^g$ on $H \setminus Z(H, D_{2n})$ is empty. So we need to see the adjacency of these vertices with those in $D_{2n} \setminus H$. Suppose $[a^rb,a^i] = 1$ and $[a^{\frac{n}{2}+r}b,a^i] = 1$ for every integer $i$ such that $1 \leq i \leq n - 1$. Then $a^{2i} = a^{n}$ and so $i=\frac{n}{2}$.  
Therefore, for every integer $i$ such that $1 \leq i \leq n-1$ and $i \neq \frac{n}{2}$, $a^i$ is adjacent to both $a^rb$ and $a^{\frac{n}{2}+r}b$. Also we have $[a^sb,a^rb] = a^{2(s-r)}$ and $[a^{\frac{n}{2}+r}b,a^sb] = a^{2(\frac{n}{2}+r-s)}$ for every integer $s$ such that $1 \leq s \leq n - 1$. Suppose $[a^sb,a^rb] = 1$ and $[a^{\frac{n}{2}+r}b,a^sb] = 1$. Then $s=r$ or $s= \frac{n}{2}+r$. Therefore, for every integer $s$ such that $1 \leq s \leq n-1$ and $s \neq r, \frac{n}{2}+r$, $a^sb$ is adjacent to both $a^rb$ and $a^{\frac{n}{2}+r}b$. Thus $\Delta_{H, D_{2n}}^g$ is connected and $\diam(\Delta_{H, D_{2n}}^g) = 2$.

\noindent \textbf{Case 2:} $g \ne 1$

 If $H =  \langle a^\frac{n}{2}, a^rb   \rangle  = \{1, a^\frac{n}{2}, a^rb,a^{\frac{n}{2}+r}b\}$ for $0 \leq r < \frac{n}{2}$ then 
$H \setminus Z(H, D_{2n}) = \{a^rb, a^{\frac{n}{2}+r}b\}$.
We have $[a^rb,a^i] = a^{2i} = [a^{\frac{n}{2}+r}b,a^i]$ for every integer $i$ such that $1 \leq i \leq \frac{n}{2} - 1$. That is, $[a^rb,a^i] = g$ and  $[a^{\frac{n}{2}+r}b,a^i] = g$ for every integer $i$ such that $1 \leq i \leq \frac{n}{2} - 1$. Thus $a^i$ is an isolated vertex in $D_{2n} \setminus H$.
Hence, $\Delta_{H, D_{2n}}^g$ is not connected.

\noindent (c)  \textbf{Case 1:} $g = 1$

We have $[a^{\frac{n}{2}+r}b,a^rb] = 1$ for every integer $r$ such that $1 \leq r \leq n - 1$. Thus $a^{\frac{n}{2}+r}b$ is an isolated vertex in $D_{2n} \setminus H$. Hence, $\Delta_{H, D_{2n}}^g$ is not connected.

\noindent \textbf{Case 2:} $g \ne 1$ 

If $H =  \langle a^rb \rangle  = \{1,a^rb\}$ for $1 \leq r \leq n$ then 
$H \setminus Z(H, D_{2n}) = \{a^rb\}$.
We have $[a^rb,a^i] = a^{2i} = g$ for every integer $i$ such that $1 \leq i \leq \frac{n}{2} - 1$. Thus $a^i$ is an isolated vertex in $D_{2n} \setminus H$. Hence, $\Delta_{H, D_{2n}}^g$ is not connected.
\end{proof}

\begin{theorem}
Consider the graph $\Delta_{H, D_{2n}}^g$, where $n \, (\geq 8)$ and $\frac{n}{2}$  are even.
\begin{enumerate}
\item If $H = \langle a^2 \rangle$ then $\Delta_{H, D_{2n}}^g$ is connected with diameter $2$ if and only if $g \notin  \langle a^4 \rangle$. 

\item If $H =  \langle a^2, b \rangle$ or $ \langle a^2, ab \rangle$ then $\Delta_{H, D_{2n}}^g$ is connected with diameter $2$ if $g = 1$ and $\diam(\Delta_{H, D_{2n}}^g) \leq 3$ if $g \ne 1$. 
\end{enumerate}
\end{theorem}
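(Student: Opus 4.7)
The plan is to reduce everything to the commutator table for $D_{2n}$. Since $n/2$ is even, $a^{n/2} \in \langle a^2 \rangle$, so in both parts $Z(H, D_{2n}) = \{1, a^{n/2}\}$ and the vertex set is $D_{2n} \setminus \{1, a^{n/2}\}$. I will repeatedly use the four identities $[a^{2i}, a^j] = 1$, $[a^{2i}, a^j b] = a^{-4i}$, $[a^{2i} b, a^j] = a^{2j}$, and $[a^{2i} b, a^j b] = a^{4i - 2j}$. Together with the adjacency rule ``at least one endpoint in $H$ and commutator not in $\{g, g^{-1}\}$'', these completely determine the graph.

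For part (a), since $H = \langle a^2 \rangle$ is abelian, the induced subgraph on $H \setminus Z(H, D_{2n})$ is edgeless if $g = 1$ and complete if $g \neq 1$. The rotations $a^j$ with $j$ odd have commutator $1$ with every element of $H$, and the reflections $a^j b$ have commutator $a^{-4i} \in \langle a^4 \rangle$ with $a^{2i}$. In the ``if'' direction, $g \notin \langle a^4 \rangle$ forces $g \neq 1$ and $g, g^{-1} \notin \langle a^4 \rangle$, so neither $1$ nor any $a^{-4i}$ equals $g$ or $g^{-1}$; hence every vertex outside $H$ is adjacent to every vertex of $H \setminus Z(H, D_{2n})$, and combined with the completeness of the induced subgraph on $H \setminus Z(H, D_{2n})$ this yields $\diam(\Delta_{H, D_{2n}}^g) = 2$. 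For the converse, if $g = 1$ then each odd rotation $a^j$ is isolated because $[a^{2i}, a^j] = 1 = g$ for all admissible $i$; if $g \in \langle a^4 \rangle \setminus \{1\}$, I would write $g = a^{4k}$ and exhibit a reflection $a^j b$ whose commutators $a^{-4i}$ with every admissible $a^{2i}$ lie in $\{g, g^{-1}\}$, making it isolated. This last step is the main technical obstacle: it reduces to a counting argument for the solutions of $-4i \equiv \pm 4k \pmod n$ in $\{1, \dots, n/2 - 1\} \setminus \{n/4\}$, and the book-keeping depends on the parity of $n/4$.

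For part (b), when $g = 1$ I would partition the vertex set into $V_1 = \langle a^2 \rangle \setminus Z(H, D_{2n})$, $V_2 = \{a^{2i} b : 0 \leq i < n/2\}$, $V_3 = \{a^j : j \text{ odd}\}$, and $V_4 = \{a^j b : j \text{ odd}\}$, and for each pair of non-adjacent vertices exhibit an explicit common neighbor chosen from the appropriate $V_r$ using the four commutator identities above; divisibility of $n$ by $4$ (since $n/2$ is even) ensures the required commutators are nonzero, giving $\diam \leq 2$. When $g \neq 1$, since $K(H, D_{2n}) = \langle a^2 \rangle$ we have $g \in H \setminus Z(H, D_{2n})$, and for $o(g) \neq 3$ the bound $\diam(\Delta_{H, D_{2n}}^g) \leq 3$ follows immediately from the earlier diameter theorem. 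The residual case $o(g) = 3$ occurs only when $3 \mid n$, and there I would give a short direct argument using the same commutator identities. The analysis for $H = \langle a^2, ab \rangle$ is parallel, using that conjugation by $a$ carries $b$ to $ab$ and hence induces an isomorphism between the two graphs.
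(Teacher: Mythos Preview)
Your instinct that the ``only if'' direction of (a) is the crux is correct, but the step you defer cannot in fact be completed: the statement fails once $n \geq 16$. Take $n = 16$ and $g = a^{4} \in \langle a^{4}\rangle$. Since $[a^{2i}, a^{r}b] = a^{-4i}$ is independent of $r$, either every reflection is isolated or none is; here $[a^{4}, a^{r}b] = a^{-8} = a^{8} \notin \{a^{4}, a^{12}\} = \{g, g^{-1}\}$, so every reflection is adjacent to $a^{4}$. Every rotation $a^{j}$ is also adjacent to $a^{4}$ (commutator $1 \neq g$), so $a^{4}$ is a universal vertex and $\diam(\Delta^{a^{4}}_{H,D_{32}}) = 2$. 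The paper's proof at this point simply asserts that for $g \in \langle a^{4}\rangle \setminus \{1\}$ ``the vertices in $D_{2n}\setminus H$ will remain isolated'', which is precisely the claim you were unable to verify---and for good reason. (It does happen to hold when $n = 8$, where $H \setminus Z(H,D_{2n}) = \{a^{2}, a^{6}\}$ and both give commutator $a^{4}$ with any reflection; but as soon as $H \setminus Z(H,D_{2n})$ has more elements the argument breaks.) Note too that $K(H, D_{2n}) = \langle a^{4}\rangle$ when $H = \langle a^{2}\rangle$, so under the paper's standing convention $g \in K(H,G)$ the ``if'' direction is vacuous anyway.

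For (b), invoking the general bound $\diam \leq 3$ for $g \in H \setminus Z(H,G)$ with $o(g) \neq 3$ is a legitimate shortcut and more conceptual than the paper's explicit path-chasing through $b$ (respectively $ab$) and the odd powers of $a$. But you must actually close the cases you defer: $o(g) = 3$ (when $3 \mid n$), and also $g = a^{n/2}$, which lies in $Z(H, D_{2n})$ and therefore falls outside the hypothesis of that theorem. The paper's direct construction of paths handles all $g \neq 1$ uniformly without appealing to the general bound. Your treatment of the $g = 1$ case of (b) is in the same spirit as the paper's and should go through once the common neighbours are written down.
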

\begin{proof}
Since $n$ is even,  we have $g = a^{2i}$ for  $1 \leq i \leq \frac{n}{2}$.

\noindent (a)  \textbf{Case 1:} $g = 1$

We know that the vertices in $H$ commutes with all the odd powers of $a$. That is, any vertex in  $\Delta_{H, D_{2n}}^g$ of the form $a^i$, where $i$ is an odd integer and $1 \leq i \leq n-1$,  is not adjacent with any vertex.
Hence, $\Delta_{H, D_{2n}}^g$ is not connected.

\noindent \textbf{Case 2:} $g \ne 1$

Since $H$ is abelian, the induced subgraph of $\Delta_{H, D_{2n}}^g$ on $H \setminus Z(H, D_{2n})$ is a complete graph. Also, the vertices in $H$ commutes with all the odd powers of $a$. That is, a vertex of the form $a^i$, where $i$ is an odd integer, in  $\Delta_{H, D_{2n}}^g$ is adjacent with all the vertices in $H$. 
We have  $[a^rb, a^{2i}] = a^{4i}$ and $[b,a^{2i}] = a^{4i}$ for every integers $r,i$ such that $1 \leq r \leq n-1$ and $1 \leq i \leq \frac{n}{2} - 1$. Thus, for $g \notin  \langle a^4 \rangle$, every vertex of $H$ is adjacent to the vertices of the form $a^rb$, where $1 \leq r \leq n$. Therefore, $\Delta_{H, D_{2n}}^g$ is a complete graph. Hence, it is connected  and $\diam(\Delta_{H, D_{2n}}^g) = 2$. 
Also, if $g = a^{4i}$ for some integer $i$ where $1 \leq i \leq \frac{n}{4} - 1$ (i,e., $g \in  \langle a^4 \rangle$)  then the vertices in $D_{2n} \setminus H$ will remain isolated. Hence $\Delta_{H, D_{2n}}^g$ is disconnected in this case. This completes the proof of part (a).

\noindent (b) \textbf{Case 1:} $g = 1$ 

Suppose that $H =  \langle a^2, b \rangle$.
Then $a^{2i} \nsim a^j$ but $a^{2i} \sim a^rb$ for all $i,j,r$ such that $1\leq i\leq \frac{n}{2}-1$, $i \neq \frac{n}{4}$;  $1\leq j\leq n-1$ is an odd number and $1\leq r\leq n$ because $[a^{2i},a^j]=1$ and $[a^{2i},a^rb]=a^{4i}$. We shall find a path to $a^j$, where $1\leq j\leq n-1$  is an odd number. We have $[a^j,b]= a^{2j} \neq 1$ and $a^j \in G \setminus H$ for all $j$ such that $1\leq j\leq n-1$ is an odd number. Therefore, $a^{2i} \sim b \sim a^j$. Hence, $\Delta_{H, D_{2n}}^g$ is connected and $\diam(\Delta_{H, D_{2n}}^g) = 2$.

If $H =  \langle a^2, ab   \rangle$ 
then $a^{2i} \nsim a^j$ but $a^{2i} \sim a^rb$ for all $i,j,r$ such that $1\leq i\leq \frac{n}{2}-1$, $i \neq \frac{n}{4}$;  $1\leq j\leq n-1$ is an odd number and $1\leq r\leq n$ because $[a^{2i},a^j]=1$ and $[a^{2i},a^rb]=a^{4i}$. We shall find a path to $a^j$, where $1\leq j\leq n-1$ is an odd number. We have $[a^j,ab]= a^{2j} \neq 1$ and $a^j \in G \setminus H$ for all $j$ such that $1\leq j\leq n-1$ is an odd number. Therefore, $a^{2i} \sim ab \sim a^j$. Hence, $\Delta_{H, D_{2n}}^g$ is connected and $\diam(\Delta_{H, D_{2n}}^g) = 2$.

\noindent \textbf{Case 2:} $g \ne 1$

We have $\langle a^2\rangle \subset H$.  Therefore, if $g \notin  \langle a^4\rangle$ then every vertex in $\langle a^2\rangle$ is adjacent to all other vertices in both the cases (as discussed in part (a)). Hence, $\Delta_{H, D_{2n}}^g$ is connected and $\diam(\Delta_{H, D_{2n}}^g) = 2$.  Suppose that $g = a^{4i}$ for some integer $i$, where $1 \leq i \leq \frac{n}{4} - 1$. 

Suppose that $H =  \langle a^2, b \rangle $. Then $a^{2i} \sim a^j$ but $a^{2i} \nsim a^rb$ for all $j,r$ such that $1\leq j\leq n-1$ is an odd number and $1\leq r\leq n$ because $[a^{2i},a^j]=1$ and $[a^{2i},a^rb]=a^{4i}$. We shall find a path between $a^{2i}$ and $a^rb$ for all $i,r$ such that $1\leq i\leq \frac{n}{2}-1$ and $1\leq r\leq n$. We have $[a^j,b]= a^{2j} \neq a^{4i}$ and $a^j \in G \setminus H$ for all $j$ such that $1\leq j\leq n-1$  is an odd number. Therefore, $a^{2i} \sim a^j \sim b$. Consider  the vertices of the form $a^rb$ where $1\leq r\leq n-1$. We have $[a^rb,b]=a^{2r}$. Suppose $[a^rb,b]=g$ then it gives $a^{2r}=a^{4i}$ which implies $r = 2i$ or $r=\frac{n}{2}+2i$. Therefore, $b \sim a^rb$ if and only if $r \neq 2i$ and $r \neq \frac{n}{2} + 2i$. So we have $a^{2i} \sim a^j \sim b \sim a^rb$, where $1\leq r\leq n-1$ and $r \neq 2i$ and $r \neq \frac{n}{2} + 2i$. Again we know that $a^{\frac{n}{2}+2i}b,a^{2i}b \in H$ and $[a^{\frac{n}{2} + 2i}b,a^{2i}b] = 1$, so $a^{\frac{n}{2}+2i}b \sim a^{2i}b$. If we are able to find a path between $a^j$ and any one of $a^{\frac{n}{2}+2i}b$ and $a^{2i}b$ then we are done. Now $[a^{2i}b,a^j] \neq a^{4i}$ and $[a^{\frac{n}{2}+2i}b,a^j] \neq a^{4i}$ for any odd number $j$ such that $1 \leq j \leq n-1$ so we have $a^{\frac{n}{2}+2i}b \sim a^j \sim a^{2i}b$. Thus 
$a^{2i} \sim a^j \sim a^{2i}b$,  $a^{2i} \sim a^j \sim a^{\frac{n}{2}+2i}b$, $a^rb \sim b \sim a^j \sim a^{2i}b$ and $a^rb \sim b \sim a^j \sim a^{\frac{n}{2}+2i}b$, where $1\leq r\leq n-1$ and $r \neq 2i$ and $r \neq \frac{n}{2} + 2i$. Hence, $\Delta_{H, D_{2n}}^g$ is connected and $\diam(\Delta_{H, D_{2n}}^g) \leq 3$.

If $H =  \langle a^2, ab   \rangle$ then $a^{2i} \sim a^j$ but $a^{2i} \nsim a^rb$ for all $j,r$ such that $1\leq j\leq n-1$  is an odd number and $1\leq r\leq n$ because $[a^{2i},a^j]=1$ and $[a^{2i},a^rb]=a^{4i}$. We shall find a path between $a^{2i}$ and $a^rb$ for all $i,r$ such that $1\leq i\leq \frac{n}{2}-1$ and $1\leq r\leq n$. We have $[a^j,ab]= a^{2j} \neq a^{4i}$ and $a^j \in G \setminus H$ for all $j$ such that $1\leq j\leq n-1$  is an odd number. So we have $a^{2i} \sim a^j \sim ab$. Consider the vertices of the form $a^rb$, where $2\leq r\leq n$. We have $[a^rb,ab]=a^{2(r-1)}$. Suppose $[a^rb,ab]=g$ then it gives $a^{2(r-1)}=a^{4i}$ which implies $r = 2i+1$ or $r=\frac{n}{2}+2i+1$. Therefore, $ab \sim a^rb$ if and only if $r \neq 2i+1$ and $r \neq \frac{n}{2} + 2i+1$. So we have $a^{2i} \sim a^j \sim ab \sim a^rb$, where $2\leq r\leq n$ and $r \neq 2i+1$ and $r \neq \frac{n}{2} + 2i+1$. Again we know that $a^{\frac{n}{2}+2i+1}b,a^{2i+1}b \in H$ and $[a^{\frac{n}{2} + 2i+1}b,a^{2i+1}b] = 1$, so $a^{\frac{n}{2}+2i+1}b \sim a^{2i+1}b$. If we are able to find a path between $a^j$ and any one of $a^{\frac{n}{2}+2i+1}b$ and $a^{2i+1}b$ then we are done. Now $[a^{2i+1}b,a^j] \neq a^{4i}$ and $[a^{\frac{n}{2}+2i+1}b,a^j] \neq a^{4i}$ for any odd number $j$ such that $1 \leq j \leq n-1$ so we have $a^{\frac{n}{2}+2i+1}b \sim a^j \sim a^{2i+1}b$. Thus 
$a^{2i} \sim a^j \sim a^{2i+1}b$, $a^{2i} \sim a^j \sim a^{\frac{n}{2}+2i+1}b$, $a^rb \sim ab \sim a^j \sim a^{2i+1}b$ and $a^rb \sim ab \sim a^j \sim a^{\frac{n}{2}+2i+1}b$, where $2\leq r\leq n$ and $r \neq 2i+1$ and $r \neq \frac{n}{2} + 2i+1$. Hence, $\Delta_{H, D_{2n}}^g$ is connected and $\diam(\Delta_{H, D_{2n}}^g) \leq 3$.
\end{proof}

\begin{theorem}
Consider the graph $\Delta_{H, D_{2n}}^g$, where $n \, (\geq 8)$ is even and $\frac{n}{2}$  is odd.   
\begin{enumerate}
\item If $H = \langle a^2 \rangle$ then $\Delta_{H, D_{2n}}^g$ is not connected if $g = 1$ and $\Delta_{H, D_{2n}}^g$ is connected with $\diam(\Delta_{H, D_{2n}}^g) = 2$ if $g \ne 1$. 

\item If $H =  \langle a^2, b \rangle$ or $ \langle a^2, ab \rangle$ then $\Delta_{H, D_{2n}}^g$ is not connected if $g = 1$ and $\Delta_{H, D_{2n}}^g$ is connected with $\diam(\Delta_{H, D_{2n}}^g) = 1$ if $g \ne 1$. 
\end{enumerate}
\end{theorem}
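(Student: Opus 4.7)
The plan has three stages: the $g=1$ assertions in both parts, the $\diam=2$ assertion of (a) for $g\ne1$, and the $\diam=1$ assertion of (b) for $g\ne1$.

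\textbf{Stage 1 ($g=1$, parts (a) and (b)).} The natural isolated vertex in both parts is $a^{n/2}$. Since $n/2$ is odd, $a^{n/2}\notin\langle a^2\rangle$ and is not of the form $a^{2k}b$ or $a^{2k+1}b$, so $a^{n/2}\in G\setminus H$ in either choice of $H$. Nevertheless $a^{n/2}\in Z(D_{2n})$, so $[a^{n/2},y]=1=g$ for every $y\in G$. Hence the commutator condition fails with every potential $H$-neighbour and the membership condition fails with every potential non-$H$ neighbour, so $a^{n/2}$ has no neighbour and $\Delta_{H,D_{2n}}^{1}$ is disconnected.

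\textbf{Stage 2 (part (a), $g\ne1$).} With $H=\langle a^2\rangle$ abelian and $g\in K(H,G)=\langle a^2\rangle\setminus\{1\}$, all internal commutators in $H$ equal $1\ne g,g^{-1}$, so $H\setminus\{1\}$ induces a clique. Odd powers $a^j\in G\setminus H$ commute with every element of $H$ and hence are adjacent to this whole clique. For $a^rb\in G\setminus H$, $[a^{2k},a^rb]=a^{-4k}$; since $\gcd(4,n)=2$ and $n/2$ is odd, at most two $k$ in $\{1,\dots,n/2-1\}$ force $a^{-4k}\in\{g,g^{-1}\}$, leaving many valid $k$ when $n\ge10$. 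A common-neighbour-in-$H$ argument bounds every distance by $2$, and equality is realised by two distinct odd powers $a^j,a^{j'}$ (both in $G\setminus H$, hence non-adjacent by the adjacency rule), giving $\diam=2$.

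\textbf{Stage 3 (part (b), $g\ne1$).} Using the additional commutator identities $[a^{2k}b,a^j]=a^{2j}$, $[a^{2k}b,a^jb]=a^{4k-2j}$, and $[a^{2k},a^jb]=a^{-4k}$, the same clique-plus-common-neighbour strategy readily yields $\diam\le 2$: each $G\setminus H$ vertex has many neighbours in $H\setminus\{1\}$, and any non-adjacent pair shares a common one. To upgrade this to the stated $\diam=1$, I would need every pair of distinct vertices to be adjacent, in particular every pair lying entirely in $G\setminus H$. The adjacency rule, however, demands at least one endpoint in $H$, and $|G\setminus H|=n\ge10$ produces abundant pairs outside $H$; such pairs are non-adjacent by definition, so completeness cannot hold on the literal reading of the rule. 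This is the principal obstacle. I would therefore execute the commutator bookkeeping as a detailed proof of $\diam\le 2$ and, in parallel, re-examine the definition of $\Delta_{H,G}^g$ against $\Gamma_{H,G}^g$ of \cite{SN2020} to determine whether an alternative convention is intended; failing such, the stated ``$\diam=1$'' appears to need revision to ``$\diam=2$'', matching the analogous $g\ne1$ assertion of part~(a) and the structure of the preceding theorem.
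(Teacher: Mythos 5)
Your Stages 1 and 2 follow essentially the paper's own route. For $g=1$ the paper isolates the odd powers of $a$ in part (a) and the vertex $a^{n/2}$ in part (b); since $n/2$ is odd, your $a^{n/2}$ is a single choice that works for both. For part (a) with $g\ne 1$ the paper, like you, reduces everything to producing one vertex of $H\setminus\{1\}$ adjacent to every $a^rb$ (the key point being that $[a^{2k},a^rb]=a^{-4k}$ is independent of $r$); it exhibits such a vertex explicitly ($g$ itself when $g^3\ne 1$, and $ga^2$ when $g^3=1$), whereas you count that at most two exponents $k$ are excluded. Either way one gets a dominating vertex in $H$, hence $\diam\le 2$, and two odd powers of $a$ are non-adjacent, hence $\diam=2$.

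Your Stage 3 diagnosis is also correct: the claim $\diam(\Delta_{H,D_{2n}}^g)=1$ in part (b) is an error in the paper, not a gap in your argument. The paper's proof of that case only says that ``either $g$ or $ga^2$ is adjacent to all other vertices'' and then concludes $\diam=1$; a single dominating vertex yields only $\diam\le 2$. Under the paper's adjacency rule (at least one endpoint must lie in $H$, as is confirmed by the degree formula $\deg(x)=|H|-|C_H(x)|$ for $x\in G\setminus H$ in Section 2), no two of the $n\ge 10$ vertices of $D_{2n}\setminus H$ are ever adjacent --- for instance $a$ and $a^3$ --- so the graph is not complete and the diameter is exactly $2$. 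Your proposed correction of the statement, and your proof of $\diam\le 2$ via the same dominating-vertex mechanism as in part (a), are the right resolution.
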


\begin{proof}
Since $n$ is even,  we have $g = a^{2i}$ for  $1 \leq i \leq \frac{n}{2}$.

\noindent  (a) \textbf{Case 1:} $g = 1$

We know that the vertices in $H$ commute with all the odd powers of $a$. That is, any vertex  of the form $a^i \in D_{2n} \setminus H$, where $i$ is an odd integer,  is not adjacent with any vertex in   $\Delta_{H, D_{2n}}^g$.  
Hence, $\Delta_{H, D_{2n}}^g$ is not connected.

\noindent \textbf{Case 2:} $g \ne 1$

Since $H$ is abelian, the induced subgraph of $\Delta_{H, D_{2n}}^g$ on $H \setminus Z(H, D_{2n})$ is a complete graph. Also, the vertices in $H$ commutes with all the odd powers of $a$. That is, a vertex of the form $a^i$, where $i$ is an odd integer, in  $\Delta_{H, D_{2n}}^g$ is adjacent with all the vertices in $H$. 
We claim that atleast one element of $H \setminus Z(H, D_{2n})$ is adjacent to all $a^rb$'s such that $1 \leq r \leq n$. 
Consider the  following cases.

\noindent \textbf{Subcase 1:} $g^3 \neq 1$

If $[g, a^rb] = g$, i.e.,  $[a^{2i}, a^rb] = a^{2i}$ for all $1 \leq i \leq \frac{n}{2} - 1$ and $1 \leq r \leq n$ then we get $g = a^{2i} = 1$, a contradiction. If $[g, a^rb] = g^{-1}$, i.e., $[a^{2i}, a^rb] = a^{n-2i}$ for all $1 \leq i \leq \frac{n}{2} - 1$ and $1 \leq r \leq n$ then  we get $g^3 = (a^{2i})^3 = a^{6i} = 1$,  
a contradiction. Therefore, $g$ is adjacent to all other vertices of the form $a^rb$ such that $1 \leq r \leq n$.

\noindent \textbf{Subcase 2:} $g^3 = 1$

If $[g, a^rb] =  g^{-1}$, i.e., $[a^{2i}, a^rb] =  a^{2i}$ then $[ga^{2}, a^rb] = g^{-1}a^{4}$ for all $1 \leq i \leq \frac{n}{2} - 1$ and $1 \leq r \leq n$. Now, if  $g^{-1}a^{4} = g^{-1}$ then $a^4 = 1$,  a contradiction since $a^n =1$ for $n \geq 8$. If $g^{-1}a^{4}= g$ then  
$a^{n-2i-4} = 1 $ for all $1 \leq i \leq \frac{n}{2} - 1$, 
which is a contradiction since $1 \leq i \leq \frac{n}{2} - 1$. Therefore, $ga^2$ is adjacent to all other vertices of the form $a^rb$ such that $1 \leq r \leq n$.

Thus there exists a vertex in $H \setminus Z(H, D_{2n})$ which is adjacent to all other vertices in $D_{2n}$.  Hence, $\Delta_{H, D_{2n}}^g$ is connected and $\diam(\Delta_{H, D_{2n}}^g) = 2$.

\noindent  (b) \textbf{Case 1:} $g = 1$ 

We know that the vertices in $H$ commute with the vertex $a^\frac{n}{2}$. That is, the vertex $a^\frac{n}{2} \in D_{2n} \setminus H$  is not adjacent with any vertex in   $\Delta_{H, D_{2n}}^g$. 
Hence, $\Delta_{H, D_{2n}}^g$ is not connected.

\noindent \textbf{Case 2:} $g \ne 1$

As shown in Case 2 of part (a), it can be seen that either $g$ or $ga^2$ is  adjacent to all other vertices. Hence, $\Delta_{H, D_{2n}}^g$ is connected and $\diam(\Delta_{H, D_{2n}}^g) = 1$.
\end{proof}

\begin{theorem}
Consider the graph $\Delta_{H, D_{2n}}^g$, where  $n (\geq 5)$ is odd.  
\begin{enumerate}
\item If $H =  \langle a \rangle$ then $\Delta_{H, D_{2n}}^g$ is connected and $\diam(\Delta_{H, D_{2n}}^g) = 2$.
\item If $H =  \langle a^rb   \rangle$, where $1 \leq r \leq n$,  then $\Delta_{H, D_{2n}}^g$ is connected with $\diam(\Delta_{H, D_{2n}}^g)$ $= 2$ if $g = 1$ and $\Delta_{H, D_{2n}}^g$ is not connected if $g \ne 1$.
\end{enumerate}
\end{theorem}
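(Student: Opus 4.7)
The plan is to exploit the structure of $D_{2n}$ with $n$ odd: since $Z(D_{2n}) = \{1\}$, one has $Z(H, D_{2n}) = \{1\}$ in both parts, so the vertex set is $D_{2n} \setminus \{1\}$. The commutator identities in $D_{2n}$ that I will use throughout are $[a^i, a^j] = 1$, $[a^i, a^jb] = a^{2i}$, and $[a^ib, a^jb] = a^{2(i-j)}$. The arithmetic key fact is that, because $n$ is odd, multiplication by $2$ is a bijection of $\mathbb{Z}/n\mathbb{Z}$, so for every $k$ the equation $2i \equiv k \pmod{n}$ has a unique solution modulo $n$, and $k \equiv -k \pmod{n}$ forces $k \equiv 0$. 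Since $K(H, D_{2n}) \subseteq \langle a \rangle$, we always have $g = a^k$ for some $k$.

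For part (a) with $H = \langle a \rangle$, I split on $g$. If $g = 1$, the induced subgraph on $H \setminus \{1\}$ is empty because $H$ is abelian, but for every $1 \leq i \leq n-1$ and every $j$, $[a^i, a^jb] = a^{2i} \neq 1$ (as $n$ odd and $i \not\equiv 0 \pmod{n}$ give $2i \not\equiv 0 \pmod{n}$). Hence every $a^i \in H \setminus \{1\}$ is adjacent to every $a^jb \in D_{2n} \setminus H$; the graph is connected via any $a^i$, so $\diam = 2$. If $g \neq 1$, write $g = a^k$. The induced subgraph on $H \setminus \{1\}$ is complete (internal commutators equal $1 \neq g, g^{\pm 1}$). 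For each fixed $a^jb$, the non-neighbours in $H$ are exactly those $a^i$ with $2i \equiv \pm k \pmod{n}$, which is at most two values, leaving at least $n - 3 \geq 2$ neighbours. Thus every vertex outside $H$ has a neighbour in $H \setminus \{1\}$, any two vertices share such a common neighbour, and $\diam = 2$.

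For part (b) with $H = \langle a^rb \rangle = \{1, a^rb\}$, the only vertex in $H \setminus Z(H, D_{2n})$ is $a^rb$, so by the adjacency rule every edge of $\Delta_{H, D_{2n}}^g$ must have $a^rb$ as an endpoint. Connectivity therefore reduces to deciding whether $a^rb$ is adjacent to every other vertex. If $g = 1$, then $[a^rb, a^i] = a^{2i} \neq 1$ for $1 \leq i \leq n-1$ (using $n$ odd) and $[a^rb, a^jb] = a^{2(r-j)} \neq 1$ whenever $j \not\equiv r \pmod{n}$, so $a^rb$ is a universal vertex and the graph is a star of diameter $2$. If $g \neq 1$, write $g = a^k$ and let $i \in \{1, \ldots, n-1\}$ be the unique solution to $2i \equiv k \pmod{n}$; then $[a^rb, a^i] = g$, so $a^rb \nsim a^i$. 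Since $a^i \in D_{2n} \setminus H$ admits no other potential neighbour, $a^i$ is isolated, and $\Delta_{H, D_{2n}}^g$ is disconnected.

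The only real obstacle is the arithmetic observation that $n$ odd makes $2$ invertible modulo $n$; this simultaneously bounds the non-neighbourhood of each $a^jb$ by two in part (a) and, in part (b), both prevents any isolated vertex when $g = 1$ and guarantees one when $g \neq 1$. Everything else is straightforward case analysis using the dihedral commutator identities above.
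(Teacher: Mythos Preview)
Your proof is correct and follows essentially the same approach as the paper: both arguments rest on the dihedral commutator identities and the observation that, for $n$ odd, the map $i \mapsto 2i$ is a bijection on $\mathbb{Z}/n\mathbb{Z}$, which in part (a) bounds the exceptional vertices and in part (b) simultaneously forces a star when $g=1$ and produces an isolated $a^i$ when $g\neq 1$. One cosmetic remark: the identity you list is $[a^i,a^jb]=a^{-2i}$ rather than $a^{2i}$, but since you only ever test membership in $\{g,g^{-1}\}$ this sign slip is harmless; your actual adjacency computations (which mostly use $[a^rb,a^i]=a^{2i}$, matching the paper) are unaffected.
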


\begin{proof}
Since $n$ is odd,  we have $g = a^{i}$ for  $1 \leq i \leq n - 1$.

\noindent (a) \textbf{Case 1:} $g = 1$

Since $H$ is abelian, the induced subgraph of $\Delta_{H, D_{2n}}^g$ on $H \setminus Z(H, D_{2n})$ is empty. Therefore, we need to see the adjacency of these vertices with those in $D_{2n} \setminus H$. Suppose that $[a^rb, a^j]=1$ and $[b,a^j]=1$ for every integers $r,j$ such that $1 \leq r,j \leq n-1$.  Then $a^{2j} = a^{n}$ and so $j=\frac{n}{2}$, a contradiction. 
Therefore, for every integer $j$ such that $1 \leq j \leq n-1$, $a^j$ is adjacent to all the  vertices in $D_{2n} \setminus H$. Thus $\Delta_{H, D_{2n}}^g$ is connected and $\diam(\Delta_{H, D_{2n}}^g) = 2$.

\noindent \textbf{Case 2:} $g \ne 1$

 Since $H$ is abelian, the induced subgraph of $\Delta_{H, D_{2n}}^g$ on $H \setminus Z(H, D_{2n})$ is a complete graph. Therefore, it is sufficient to prove that no vertex in $D_{2n} \setminus H$ is isolated.  
Since $n$ is odd we have $g \ne g^{-1}$. 
If $[a^rb, a^j]=g$ and $[b,a^j]=g$ for every integers $r,j$ such that $1 \leq r,j \leq n-1$ then $j= \frac{i}{2}$ or $j= \frac{n+i}{2}$. If $[a^rb, a^j]=g^{-1}$ and $[b,a^j]=g^{-1}$ for every integers $r,j$ such that $1 \leq r,j \leq n-1$ then $j= \frac{n-i}{2}$ or $j= n-\frac{i}{2}$. Therefore, there exists an integer $j$ such that $1 \leq j \leq n-1$ and $j \neq \frac{i}{2}, \frac{n+i}{2}, \frac{n-i}{2}\text{ and } n-\frac{i}{2}$ for which $a^j$ is adjacent to all other vertices in $D_{2n} \setminus H$.
Thus $\Delta_{H, D_{2n}}^g$ is connected and $\diam(\Delta_{H, D_{2n}}^g) = 2$.

\noindent (b) \textbf{Case 1:} $g = 1$

We have $[a^rb, a^j] \neq 1$ and $[b,a^j] \neq 1$ for every integers $r,j$ such that $1 \leq r,j \leq n-1$. So $a^rb$ is adjacent to $a^j$ for every integer $j$ such that $1 \leq j \leq n-1$. Also we have $[a^sb,a^rb] = a^{2(s-r)}$ for every integers $r,s$ such that $1 \leq r,s \leq n$. Suppose $[a^sb,a^rb] = 1$ then $s=r$ as $s= \frac{n}{2}+r$ is not possible. Therefore, for every integers $r,s$ such that $1 \leq r,s \leq n$ and $s \neq r$, $a^sb$ is adjacent to $a^rb$. Thus $\Delta_{H, D_{2n}}^g$ is connected and $\diam(\Delta_{H, D_{2n}}^g) = 2$.

\noindent \textbf{Case 2:} $g \ne 1$

If $i$ is even then $[a^\frac{i}{2}, a^rb]=a^i=g$ and so the vertex $a^\frac{i}{2}$ remains isolated. If $i$ is odd then $n-i$ is even and we have $[a^\frac{n-i}{2}, a^rb]=a^{n-i}=g^{-1}$. Therefore, the vertex $a^\frac{n-i}{2}$ remains isolated. Hence, $\Delta_{H, D_{2n}}^g$ is not connected. 
\end{proof}

\begin{theorem}
Consider the graph $\Delta_{H, D_{2n}}^g$, where $n (\geq 5)$ is odd. 
\begin{enumerate}
\item If $H =  \langle a^d   \rangle$, where $d|n$ and $o(a^d) = 3$, then $\Delta_{H, D_{2n}}^g$ is connected with diameter $2$ if and only if $g = 1$.

\item If $H =  \langle a^d, b\rangle$, $\langle a^d, ab\rangle$ or $\langle a^d, a^2b\rangle$, where $d|n$ and $o(a^d) = 3$, then $\Delta_{H, D_{2n}}^g$ is connected with diameter $2$ if $g \neq 1, a^d, a^{2d}$.

\item If $H = \langle a^d, b\rangle$, where $d|n$ and $o(a^d) = 3$, then $\Delta_{H,D_{2n}}^g$ is connected and $\diam(\Delta_{H,D_{2n}}^g) = \begin{cases}
2, & \text{ if } g =1 \\
3, & \text{ if } g = a^d \text{ or } a^{2d}.
\end{cases}$
\end{enumerate}
\end{theorem}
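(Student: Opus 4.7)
The plan is to reduce everything to commutator computations in $D_{2n}$. Since $n$ is odd, $Z(D_{2n})=\{1\}$ and hence $Z(H,D_{2n})=\{1\}$ in all three parts, so the vertex set of $\Delta_{H,D_{2n}}^g$ is $D_{2n}\setminus\{1\}$. Adjacency then reduces to the condition: at least one endpoint lies in $H\setminus\{1\}$ and $[x,y]\notin\{g,g^{-1}\}$. I will use repeatedly the identities $[a^k,a^j]=1$, $[a^k,a^rb]=a^{-2k}$, $[a^kb,a^j]=a^{2j}$, and $[a^kb,a^rb]=a^{2(k-r)}$; together with $n=3d$, these imply that every commutator involving $a^d$ or $a^{2d}$ lies in $\{1,a^d,a^{2d}\}$, which coincides with $K(H,D_{2n})$ in (a).

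For parts (a) and (b), the common thread is that $a^d$ is a potential universal vertex. In (a), with $H=\{1,a^d,a^{2d}\}$, the two non-identity members of $H$ are rotations that attach to $a^j$ only through trivial commutators and to $a^rb$ through $a^d$ or $a^{2d}$; the biconditional follows from a case split on $g\in\{1,a^d,a^{2d}\}$, identifying in each case whether the rotations outside $H$ or the reflections become disconnected from $\{a^d,a^{2d}\}$, and pinpointing the value of $g$ for which neither class fails adjacency. For (b), each subgroup $H=\langle a^d,c\rangle$ with $c\in\{b,ab,a^2b\}$ contains $a^d$; when $g\notin\{1,a^d,a^{2d}\}$, both $[a^d,a^j]=1$ and $[a^d,a^rb]=a^d$ avoid $\{g,g^{-1}\}$, so $a^d$ is adjacent to every other vertex, giving $\diam\leq 2$. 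The existence of non-adjacent pairs (two rotations outside $H$ are neither in $H$ nor commutator-linked through $H$) forces equality. The argument is uniform in $c$ since only $a^d$ is used.

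For part (c), with $H=\langle a^d,b\rangle=\{1,a^d,a^{2d},b,a^db,a^{2d}b\}$, the case $g=1$ is handled by showing $b$ is a universal vertex: since $n$ is odd, $[b,a^j]=a^{-2j}\neq 1$ for $j\neq 0$ and $[b,a^rb]=a^{-2r}\neq 1$ for $r\neq 0$, hence every vertex is joined to $b$ and $\diam=2$. For $g=a^d$ (symmetric for $g=a^{2d}$), I exhibit a pair at distance exactly $3$. Take $a^d$ and any reflection $a^rb$ with $r\notin\{0,d,2d\}$: they are non-adjacent because $[a^d,a^rb]=a^d=g$. The neighborhood of $a^d$ consists only of rotations $a^j$ (since $[a^d,a^rb]=g$ for every reflection), whereas the neighborhood of $a^rb$ lies in $\{b,a^db,a^{2d}b\}$ (the rotations $a^d,a^{2d}\in H$ are excluded by the commutator condition, and vertices outside $H$ cannot be neighbors of $a^rb$ at all). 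These two neighborhoods are disjoint, forcing distance $\geq 3$; a concrete length-$3$ path $a^d\sim a^j\sim a^{2d}b\sim a^rb$ for a suitable $j$ realizes equality. The global bound $\diam\leq 3$ follows by showing that every reflection in $H\setminus\{1\}$ is adjacent to all vertices in $D_{2n}\setminus H$, so any two vertices can be linked by a path through $H$ of length at most three.

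The main obstacle is the uniform bound $\diam\leq 3$ in the $g=a^d$ case of (c). One must verify that $\{b,a^db,a^{2d}b\}$ dominates $D_{2n}\setminus H$ and that each of $a^d,a^{2d}$ is reachable from every vertex in at most three steps. The exceptional residues of $r\pmod n$ where $a^rb$ might fail to be adjacent to a given reflection in $H$---arising from the equations $a^{-2r}=a^{\pm d}$, $a^{2(d-r)}=a^{\pm d}$, and $a^{2(2d-r)}=a^{\pm d}$---all collapse into $\{0,d,2d\}$ once the constraint $n=3d$ and the oddness of $n$ are used to invert $2$ modulo $n$; this combinatorial coincidence, together with the immediate adjacency of external rotations to $a^d$, is the key observation that closes the diameter argument.
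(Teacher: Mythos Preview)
For parts (b) and (c) your approach is essentially the paper's: in (b) you use $a^d$ as a universal vertex exactly as the paper does, and in (c) with $g=1$ you use $b$ as a universal hub, which is again the paper's argument. For $g\in\{a^d,a^{2d}\}$ in (c) you actually go further than the paper: you exhibit a pair at distance exactly $3$ by showing that $N(a^d)$ consists only of rotations while $N(a^rb)=\{b,a^db,a^{2d}b\}$ for $r\notin\{0,d,2d\}$, so these neighbourhoods are disjoint. The paper only constructs length-$3$ paths and then asserts $\diam=3$ without arguing the lower bound. Your residue-collapse computation is correct: with $n=3d$ odd, $2r\equiv d\pmod{3d}$ forces $r\equiv 2d$ and $2r\equiv 2d$ forces $r\equiv d$, so the exceptional $r$'s for each of $b,a^db,a^{2d}b$ indeed lie in $\{0,d,2d\}$.

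There is, however, a genuine problem in part (a), and it is not a defect of your strategy but of the statement itself. Your plan is to split on $g\in K(H,D_{2n})=\{1,a^d,a^{2d}\}$ and identify the value of $g$ for which neither the external rotations nor the reflections become isolated. No such $g$ exists. When $g=a^d$ (equivalently $a^{2d}$, since $(a^d)^{-1}=a^{2d}$), every reflection $a^rb$ lies outside $H$ and satisfies $[a^d,a^rb]=a^{-2d}=a^d$ and $[a^{2d},a^rb]=a^{-4d}=a^{2d}$, so $a^rb$ is isolated. When $g=1$, every rotation $a^i$ with $i\notin\{0,d,2d\}$ lies outside $H$ and its only potential neighbours are $a^d,a^{2d}$; but $[a^i,a^d]=[a^i,a^{2d}]=1=g$, so $a^i$ is isolated. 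Such $a^i$ exist because $n=3d$ with $d$ odd and $n\geq 5$ forces $d\geq 3$, hence $n\geq 9$. Thus $\Delta^g_{H,D_{2n}}$ is disconnected for \emph{every} admissible $g$, and the ``if'' direction of (a) fails. The paper's own proof of (a) simply overlooks the rotations outside $H$ in the $g=1$ case; carrying out your case split carefully would expose this rather than establish the biconditional.
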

\begin{proof}
(a) Given $H =  \{1, a^d, a^{2d}\}$. We have  $[a^d, a^{2d}] = 1$, $[a^d, a^rb] = a^{2d}$ and  $[a^{2d}, a^rb] = a^{4d} = a^d$ for all $r$ such that $1 \leq r \leq n$. Therefore, $g = 1, a^d$ or $a^{2d}$. If $g = a^d$ or $a^{2d}$ then $a^d \nsim a^rb$ and $a^{2d} \nsim a^rb$ for all $r$ such that $1 \leq r \leq n$. Thus $\Delta_{H, D_{2n}}^g$ is disconnected. If $g = 1$ then $a^d \sim a^rb$, $a^{2d} \sim a^rb$ and $a^d \sim a^rb \sim a^{2d}$  for all $r$ such that $1 \leq r \leq n$. Note that $a^d \nsim a^{2d}$. Hence, $\Delta_{H, D_{2n}}^g$ is connected with diameter $2$.

(b) If $g \neq 1, a^d, a^{2d}$ then $a^d$ is adjacent to all other vertices, as discussed in part (a). Hence, $\Delta_{H, D_{2n}}^g$ is connected and $\diam(\Delta_{H, D_{2n}}^g) = 2$.

(c) \textbf{Case 1:} $g =1$

 Since $n$ is odd, we have $2i \ne n$  for all integers $i$ such that $1 \leq i \leq n-1$. Therefore, if $g = 1$ then $b$ is adjacent to all other vertices because $[a^i, b] = a^{2i}$ and $[a^rb, b] = a^{2r}$ for all integers $i, r$ such that $1 \leq i, r \leq n-1$. Hence, $\Delta_{H,D_{2n}}^g$ is connected and $\diam(\Delta_{H,D_{2n}}^g) =2$.
 
\noindent\textbf{Case 2:} $g = a^d$ or $a^{2d}$

Since $[a^d, a^{2d}]=1$ we have $a^d \sim a^{2d}$. Also, all the vertices of the form $a^i$ commute among themselves, where $1 \leq i \leq n-1$. Therefore, $a^d \sim a^i \sim a^{2d}$ for all $1 \leq i \leq n-1$ such that $i \ne d, 2d$. Again, $[a^i, a^rb] = a^{2i} = [a^i, b]$ for all $1 \leq i, r \leq n-1$. If $[a^i, a^rb] = a^d$ or $a^{2d}$ for all $1 \leq r \leq n$, then $i = 2d$ or $d$ respectively. Therefore, $a^d \sim a^i \sim b$, $ a^d \sim a^i \sim a^db$, $a^d \sim a^i \sim a^{2d}b$, $a^{2d} \sim a^i \sim b$, $ a^{2d} \sim a^i \sim a^db$ and $a^{2d} \sim a^i \sim a^{2d}b$ for all $1 \leq i \leq n-1$ such that $i \ne d, 2d$. If $[a^rb, b] = a^d$ or $a^{2d}$ for all $1 \leq r \leq n-1$, then $a^{2r} = a^d$ or $a^{2d}$; which gives $r = 2d$ or $d$ respectively. Therefore, $a^d \sim a^i \sim b \sim a^rb$, $ a^{2d} \sim a^i \sim b \sim a^rb$, $a^db \sim a^i \sim b \sim a^rb$ and $a^{2d}b \sim a^i \sim b \sim a^rb$ for all $1 \leq i, r \leq n-1$ such that $i, r \ne d, 2d$. Hence, $\Delta_{H,D_{2n}}^g$ is connected and $\diam(\Delta_{H,D_{2n}}^g) =3$.
\end{proof}

\section*{Acknowledgment}
%The authors would like to thank the referees for their valuable comments and suggestions. 
The first author would like to thank  DST for the INSPIRE Fellowship.

\end{document}